\newcommand{\ninseps}[3]{
\begin{figure}[h]
\begin{center}
 \scalebox{#3}{\includegraphics{#1}}
\end{center}
\caption{\hspace{0.25cm}#2\label{f:#1}}
\end{figure}
}
\DeclareSymbolFont{AMSb}{U}{msb}{m}{n}
\DeclareMathSymbol{\N}{\mathbin}{AMSb}{"4E}
\DeclareMathSymbol{\Z}{\mathbin}{AMSb}{"5A}
\DeclareMathSymbol{\R}{\mathbin}{AMSb}{"52}
\DeclareMathSymbol{\Q}{\mathbin}{AMSb}{"51}
\DeclareMathSymbol{\I}{\mathbin}{AMSb}{"49}
\DeclareMathSymbol{\C}{\mathbin}{AMSb}{"43}
\DeclareMathSymbol{\F}{\mathbin}{AMSb}{"46}
\DeclareMathSymbol{\LL}{\mathbin}{AMSb}{"4C}
\newtheorem{theorem}{Theorem}[section]
\newtheorem{lemma}[theorem]{Lemma}
\newtheorem{definition}{Definition}[section]
\newtheorem{remark}{Remark}
\newtheorem{assumption}{Assumption}
\begin{document} 
\title{Asymptotically Optimal Importance Sampling for Jackson Networks with a Tree Topology}

\author{{\normalsize Ali Devin Sezer}\\
\small Institute of Applied Mathematics\\
\small Middle East Technical University\\
\small Ankara, Turkey
\\
\\ 
}


\maketitle
\thispagestyle{empty}
\begin{abstract}
Importance sampling (IS) is a variance reduction method
for simulating rare events. 
A recent paper by Dupuis, Wang and Sezer (Ann. App. Probab. 17(4):1306- 1346, 2007) 
exploits connections between IS and subsolutions to a limit HJB
equation and its boundary conditions
to show how to design and analyze simple and efficient IS
algorithms for various overflow events for tandem Jackson networks.
The present paper uses the same subsolution
approach to build asymptotically optimal IS schemes
for stable open Jackson networks with a tree topology. 
Customers arrive at the single root of the tree.
The rare overflow event we consider is the following: 
given that initially the network is empty, the system
experiences a buffer overflow before returning to the empty state.
Two types of buffer structures are considered: 1) A single system-wide
buffer of size $n$ shared by all nodes, 2) each node $i$ has its
own buffer of size $\beta_i n$, $\beta_i \in (0,1)$.
\end{abstract}
\section{Introduction}
Importance sampling (IS) is a method for simulation of rare events. 
It is used
in many applications including simulation of communication systems, computation of credit 
risk and pricing of financial
derivatives. 
The idea in IS is to change the sampling distribution (and modify the Monte
Carlo estimator accordingly) to reduce estimator variance. 
Queuing processes are basic stochastic models that are commonly used
in a wide range of application areas.
The simplest type of queuing processes are Jackson networks,
in which the arrival and service times at the nodes of the network are 
assumed to be
independent and exponentially distributed with constant rates.

In the present paper we build an IS algorithm, which is optimal in a certain
asymptotic sense (see Section \ref{s:IS}), to simulate buffer overflows of
stable open Jackson networks with a tree topology.
The system
is stable in the sense that the average service rate at each
node is faster than the average arrival rate to that node.
Customers arrive at the single root of the tree. 
The rare overflow event we consider is the following: 
given that initially the network is empty the system
experiences a buffer overflow before returning to the empty state.
Two types of buffer structures are considered: 1) A single system-wide
buffer of size $n$ shared by all nodes 2) each node $i$ has its
own buffer of size $\beta_i n$, $\beta_i \in (0,1)$.

To construct our optimal IS algorithms we use an optimality 
result from \cite{thesis}
which was obtained using the optimal control/subsolution approach to IS of 
\cite{DSW, duphui-is1,duphui-is2,duphui-is3,duphui-is4}.
This result states that
to construct optimal IS algorithms for the simulation of a wide range
of buffer overflow events of any stable Jackson network
it is sufficient to build appropriate smooth
subsolutions to a Hamilton Jacobi Bellman (HJB) equation  and its boundary conditions
(these are given in \eqref{e:DPE} in the context we study in the current paper). 
This HJB equation
and the boundary conditions 
are the main tools of the optimal control/subsolution approach and are derived
from an optimal control representation of the IS distribution construction problem.

The main contribution
of the present paper is a recursive algorithm which takes as input the parameters of an 
arbitrary Jackson network with a tree topology and constructs a smooth subsolution to 
the HJB equation and its boundary conditions given in \eqref{e:DPE}.
The constructed subsolution is of the form of a smoothed minimum of affine functions, as was
the case in previous works using the subsolution approach, e.g. \cite{DSW,thesis}.
The quantities
that appear in the subsolution 
(and hence the algorithm) have simple heuristic interpretations as 
{\em effective } utilities and rates of nodes
in the system. They are ``effective'' in the sense
that they depend on whether a node is empty or nonempty. 
These concepts are explained in detail in subsection \ref{ss:ssubsol}.
The main results of the paper are Lemmas \ref{l:main} and \ref{l:rectangle} which
prove that 
the subsolutions arising from the effective rates and utilities
satisfy all the conditions of 
the general optimality theorem in \cite{thesis} for both type of buffer 
structures that we will be studying in this paper. Numerical results
in Sections \ref{s:numerical} and \ref{s:rectangle} demonstrate the
practical usefulness of the resulting IS algorithms.

Since the initial writing
\cite{istrees} 
of the present paper 
a recent paper by Dupuis and Wang 
\cite{yeniDW} appeared
that treat the IS problem for any stable Jackson network
using the subsolution approach.
The relation between the results in the current paper and those
in \cite{yeniDW} is discussed in Section \ref{s:discussion}.

There is a tremendous amount
of work on the IS of queueing networks, which include,
\cite{Rubetal04,WeiQui,Sadowsky91, Changetal, KroeseNicola,JunejaNicola,ParWal,
GlassKou,
BoerNicola02}.
The problem of constructing IS algorithms for buffer overflow of queuing networks 
was first posed for the simple two node tandem network in \cite{ParWal},
which also proved that a static large deviations based change of measure
is asymptotically optimal for certain parameter values of the system.
An asymptotically optimal IS algorithm with
optimality proofs for buffer overflow of stable tandem Jackson networks was first developed in \cite{DSW} using the
optimal control/subsolution approach. The discontinuous dynamics of the queuing process near
the boundaries of its state space (i.e., when few customers remain
in some of the nodes) makes the IS construction problem for queuing networks 
difficult \cite{DSW,GlassKou}.
This property rules out iid sampling distributions
(such as those developed in \cite{Siegmund} in the context of a random walk on the 
real line and in \cite{ParWal} in the context of two tandem Jackson nodes) as candidates for 
efficient IS samplers and forces one to search for a good IS distribution among dynamic 
distributions, where indeed the subsolution approach locates the optimal IS distributions. 
For a more in depth discussion of these issues we refer the
reader to \cite{DSW,thesis,GlassKou, duphui-is1}.

\section{Setup}\label{s:setup}

We consider Jackson networks with a tree topology. 
Customers
arrive only at the root of the tree. 
Our goal is to construct optimal IS algorithms to estimate the following probability:
\begin{equation}\label{e:ep1}
P_0( \text{system experiences an overflow before it empties}).
\end{equation}
This overflow event depends on the buffer structure of the 
network, which
will be made precise in subsection \ref{ss:overflow}.
For the computation of $p_0$ it is enough to consider the
embedded discrete time random walk of the Jackson network. The normalized
service and arrival rates and the routing probabilities of the Jackson network
are the jump probabilities of the embedded random walk.

\subsection{Notation and Definitions} \label{ss:notation}

The tree consists of $d$ nodes. 
$X(i)$ is the population of $i^{th}$ node at the jump times in the network.
$i\rightarrow j$ denotes that node $j$ is a child of node $i$.
For $i \rightarrow j$,
$\mu_{i,j}> 0 $ is the rate at which customers are served in node
$i$ and are either [sent to node $j$ if $j>0$] or [ leave the system
if $j=0$]. 

Total service rate at node $i$ is defined as
$\mu_i \doteq \sum_{k} \mu_{i,k}.$
Arrival rate to node $\Lambda_j$ at node $j$ equals
$\lambda$ if $j$ is the root node. Otherwise it equals
$\Lambda_j \doteq \Lambda_i \frac{\mu_{i,j}}{\mu_i}$
where node $i$ is the parent of node $j$.
It is no loss of generality to assume that
$\lambda + \sum_{i=1}^d \mu_i$ equals $1$ ; otherwise
one can change the time unit so that the equality holds.
The utility of node $i$ is defined
as:
$\rho_i \doteq \Lambda_i / \mu_i.$
The Jackson network is called stable if $\rho_i < 1$ for all
$i \in\{1,2,...,d\}$.
Therefore we assume
that
$\vee_{i=1}^d \rho_i  < 1.$
This stability assumption implies that the buffer overflow events of
interest we study in the present paper decay exponentially in $n$ (see \eqref{e:ldres} and
\eqref{e:rectrate}). 
Asymptotic optimality of an IS algorithm is stated in terms of this 
exponential decay (see Section \ref{s:IS}).

The evolution of the random walk $X$ takes place in the state space ${\mathbb Z}^d_+$. 
This set has $2^d - 2$ different boundaries:
$\partial_i \doteq 
\{x =(x_1,x_2,...,x_d) \in{\mathbb Z}^d_+: x_i = 0 \}$, $i \in \{1,2,...,d\}$,
$\partial_{\{i_1,i_2,...,i_k\}} \doteq \bigcap_{l=1}^k \partial_{i_l}$,
$\{i_1,i_2,...,i_k\}$ $\subset \{1,2,...,d\}.$
As we have remarked earlier the dynamics of $X$ depends on whether $X$ is  on one
of these boundaries and if so it further depends on which one. 
We will find it convenient to identify these boundaries with 
bitmaps $b\in \{0,1\}^d$.
$b$ describes the following state of the network: $b(i)=0$ signifies that node $i$ is empty,
$b(i)=1$ signifies that it is non-empty.
Define
$
v_{0,1} = (1,0,...,0)
$
and
\begin{align*}
{\mathcal V}_2 &\doteq \left\{ v_{i,j}, i,j \in \{1,2,...,d\}: i\rightarrow j, \right.\\
&~~~~~~~~~~~~~~~ \left. v_{i,j}(i)=-1,~v_{i,j}(j)=1,~v_{i,j}(k)=0, k \in \{1,2,...,d\}- \{i,j\} \right\}\\
{\mathcal V}_3 &\doteq \left\{ v_{i,0}, i \in \{1,2,...,d\}:
  v_{i,0}(i)=-1,~v_{i,0}(k)=0, k \in \{1,2,...,d\}- \{i\} \right\}
\end{align*}
Let ${\mathcal V} \doteq \{ v_{0,1} \} \cup {\mathcal V}_2 \cup {\mathcal V}_3.$
${\mathcal V}$ are the set of all possible jumps the process $X$ can make.
$v_{0,1}$ corresponds to a new customer arriving at the root node,
$v_{i,j} \in {\mathcal V}_2$
corresponds to server $i$ serving a customer in queue $i$ and
sending it to queue $j$ with $i\rightarrow j$, and finally
$v_{i,0} \in {\mathcal V}_3$ corresponds to a customer leaving the system after being served
by  server $i$.

Let $Y=\{Y_k: k =0,1,2,...\}$ be an iid
sequence such that
$P_x( Y_k = v_{0,1} ) =p(v_{0,1}) \doteq \lambda$,
$P_x( Y_k = v_{i,j} ) = p(v_{i,j}) \doteq \mu_{i,j}$ for $v_{i,j} \in {\mathcal V}_2$,
$P_x( Y_k = v_{i,0} ) = p(v_{i,0}) \doteq \mu_{i,0}$ for $v_{i,0}  \in {\mathcal V}_3$,
for all $x \in {\mathbb Z}^d_+$.
$Y_k$ are the unconstrained increments of the process $X$.
We assume the existence of a probability space
$(\Omega, {\mathcal F})$ equipped with the probability distributions $P_x$. The
subscript $x$ denotes the initial position of the queuing system $X_0$: under $P_x$,
$X_0=x$ almost surely.

$X \in \partial_{\{i_1,i_2,...,i_k\}}$  if the Jackson
network has no customers in queues $i_1$, $i_2$,...,and $i_k$. Therefore
 $v_{l,j}$, $j \in \{0,1,2,..,d\}$, $l \in \{i_1,i_2,...,i_k\}$,
cannot be an increment of $X$  when $X \in \partial_{\{i_1,i_2,...,i_k\}}$.
The constraining map $\pi:{\mathbb R}^d_+ \times {\mathcal V} \rightarrow 
{\mathcal V} \cup \{0\}$ will make
sure that this does not happen:
$$
\pi(x,v) =
\begin{cases}
0, &\text{if }
x\in \partial_i \text{ for some }i\in\{1,2,...,d\} \text{ and }
 \langle v, n_i \rangle < 0,\\
v,
 &\text{otherwise, } 
\end{cases}
$$
where $n_i$ is normal to the boundary $\partial_i$: $n_i(i)= 1$ and $n_i (j) =0$ for $j\neq i$.
$X$  can now be written as
\begin{equation}\label{e:dynamics}
X_{k+1} \doteq X_k + \pi(X_k,Y_k).
\end{equation}
$X_0$ is the initial state of the system and under $P_x$ it equals $x\in {\mathbb Z}^d_+$
almost surely.
\subsection{Overflow event of interest}\label{ss:overflow}
We would like to develop IS algorithms to estimate \eqref{e:ep1}. 
We now define what we mean by an overflow.
Let  $\partial_+^d \doteq \{ x \in {\mathbb R}^d_+: \vee_i x(i) = 1 \}$.
\begin{assumption}\label{as:bs}
The system has a buffer whose
structure is determined
by a normalized exit set ${\mathcal S} \subset [0,1]^d$ with the following properties:
1) ${\mathcal S}$ is closed and connected, 
2) $0 \notin {\mathcal S}$,
3) Any continuous curve in $[0,1]^d$ that contains $0$ and
a point from $\partial_+^d$ must also contain a point from ${\mathcal S}$.
4) For $S_n \doteq \{x \in {\mathbb Z}^d_+: x/n \in {\mathcal S} \}$,
\begin{equation}\label{e:eda}
\gamma \doteq \lim_{n \rightarrow \infty} - \frac{1}{n}\log P_{\bf s}( X \text{ hits }  S_n
\text{ before } 0)
\end{equation}
exists and is nonzero. 
\end{assumption}

In this article we are interested in two types of buffer structures:
1)
$
{\mathcal S}_1 \doteq \{x \in {\mathbb R}^d_+ : x(1) + x(2) + \cdots + x(d) = 1\}.
$
$S_n \doteq \{x \in {\mathbb Z}^d_+: x/n \in {\mathcal S}_1 \}$ corresponds to a single buffer of size $n$
shared by all queues.
For $\beta \in {\mathbb R}^d_+$ 
$
{\mathcal S}_2 = \{x \in {\mathbb R}^d_+ : x(i) = \beta(i) \text{ for some $i$ and } x(j) \le \beta(j) \text{ for all }j   \}.
$
Then
$S_n \doteq \{x \in {\mathbb Z}^d_+: x/n \in {\mathcal S}_2 \}$ corresponds to $d$ independent buffers,
one for each node. The size of the buffer for node $i$ is given by $n\beta(i)$.
Without loss of generality we will assume that $\vee_i \beta(i) = 1.$

Define
the initial point
${\mathbf s} \doteq (1,0,0,0,\dots,0)$.
Fix a buffer structure ${\mathcal S}$ and define the exit boundaries $S_n$ as above.
We now rewrite the exit probability of interest precisely as:
$
p_n \doteq P_{\bf s}( X \text{ hits } S_n \text{ before it hits } 0 ).
$
We consider the case ${\mathcal S} = {\mathcal S}_1$ (all nodes share a single buffer)
in Section \ref{s:shared}
and the case ${\mathcal S} = {\mathcal S}_2$ (one buffer for each node) in Section \ref{s:rectangle}.

\section{Importance Sampling}\label{s:IS}
In order to simulate  $X$ using importance sampling one specifies
a sampling distribution $\bar{p}(v|x)$, $v \in {\mathcal V}$ and 
$x \in {\mathbb Z}^+$ and simulates $X$ from this distribution.
Note that we allow $\bar{p}$ to depend on $x$, the current position of $X$.
Define $A_n$ to be the set of sample paths that hit the exit set $S_n$ before $0$
and let $T_n$ denote the first time $X$ hits $S_n$ or $0$.
The IS estimator of $p_n$ using $K$ sample paths is then:
\begin{equation}\label{e:ISestimator}
	\frac{1}{K} \sum_{k=1}^K \hat{p}_n^k,
	~~~~~~ \hat{p}_n^k \doteq  1_{A_n}(X^k) \cdot \prod_{i=1}^{T_n -1} \frac{p(Y^k_i)}{\bar{p}(Y^k_i|X^k_i)},
\end{equation}
where 
$X^k$ denotes the $k^{th}$ independent sample path used in the simulation.
The increments $\{Y^k\}$ 
are iid copies of the increment process $Y$ sampled from $\bar{p}$. $X^k$ is 
built along with $Y^k$ using the dynamics \eqref{e:dynamics}.
The product is the likelihood ratio of $P_{\bf s}$ and $\bar{P}$, which
appears in the estimator to cancel off the effect of changing the sampling distribution
from $p$ to $\bar{p}$.

$\hat{p}_n\doteq \hat{p}_n^1$ is an unbiased estimator of $p_n$
and therefore the variance of $\hat{p}_n$ depends on the sampling
distribution only through the second moment of $\hat{p}_n$.
Because $p_n$ decays exponentially, one would like the second moment of
$\hat{p}_n$ to decay exponentially as well.
However, Jensen's inequality implies that
$$
\limsup_n -\frac{1}{n} 
\log \hat{\mathbb E}[\hat p_n^2] \le 
\limsup_n -\frac{2}{n}  \log \hat{\mathbb E}[\hat p_n]\equiv 2\gamma.
$$
In other words, the exponential decay rate of the second moment can be at
most twice that of the probability.
The IS estimator is said to be {\it asymptotically optimal} if
the upper bound is achieved, i.e., if
$\liminf_n -\frac{1}{n}\log {\mathbb E}[\hat p_n^2] \ge 2\gamma.$

\subsection{Definitions from the subsolution approach}
In this subsection we will give only the definitions from the subsolution
approach 
that we need to present the results and the algorithm for the tree Jackson 
networks. A full development of the subsolution approach ideas can 
be found in \cite{duphui-is3, duphui-is4,DSW}.

\paragraph{Hamiltonians, the limit HJB equation and the boundary conditions.}
For a bitmap $b\in\{0,1\}^d$ and $q \in {\mathbb R}^d$ define
\begin{align}\label{e:Hams}
N_b(q) &\doteq 
\lambda e^{-q(1)/2}+
\sum_{i:b(i)=1}
\sum_{ i\rightarrow j} \mu_{i,j} e^{\frac{q(i) - q(j)}{2}}\notag
+
\sum_{i:b(i) = 1} \mu_{i,0} e^{\frac{q(i)}{2}}
+
\sum_{i:b(i) = 0 } \mu_i,\\
H_b(q) &= -2\log N_b(q).
\end{align}
$H_b$ is the Hamiltonian associated with boundary $b$. We denote $H_b$
by $H$ if $b=(1,1,1,\dots,1,1)$.

For $x \in {\mathbb R}^d_+$, define $b_x\in\{0,1\}^d$ as follows:
\begin{equation}\label{e:bx}
b_x(i) \doteq \begin{cases}
0,& \text{ if } x(i) = 0,\\
1,& \text{ otherwise.}
\end{cases}
\end{equation}
$b_x$ indicates which boundary $x$ is on (if $b_x=(1,1,\dots,1,1)$ then
$x$ is in the interior of ${\mathbb R}^d_+$).

\paragraph{Definition of a subsolution.}
The limit HJB equation
and its boundary conditions
that are in the center of the subsolution approach are as follows:
\begin{equation}\label{e:DPE}
H(DV(x))=0,~~
H_{b_x}(DV(x)) = 0,
\end{equation}
where $DV$ denotes the gradient of $V$.
A subsolution to \eqref{e:DPE} is defined as follows:
\begin{definition}\label{d:smoothsubsolg}
$\bar{V}$ is an $\epsilon$-subsolution to \eqref{e:DPE}
if it is $C^1({\mathbb R}^d,{\mathbb R})$ and
\begin{enumerate}
	\item [(a)]
$~H_{b_x}( D\bar{V}(x)) \geq -\epsilon$ for all  $x \in {\mathbb R}^d_+$,
\item[(b)]
 $~\bar{V}(0) \ge 2\gamma -\epsilon$,
 \item[(c)]
$~\bar{V}(x)\leq \epsilon ,x\in {\mathcal S},$
\end{enumerate}
where $\gamma$ is the decay rate associated with the buffer structure ${\mathcal S}$.
\end{definition}
For $q \in {\mathbb R}^d$
and bitmap $b$ define the jump probabilities:
\begin{equation}
\label{e:jump}
\bar{p}^*_b(q)(v_{i,j})
=\begin{cases}
	\lambda \frac{\exp(-q(j)/2)}{N_b(q)},~~~~& i = 0, ~j=1\\
	\mu_{i,j}\frac{\exp( (q(i)-q(j))/2)}{N_b(q)},~~~~& i \neq 0, b(i) = 1, i\rightarrow j\\
	\mu_{i,0} \frac{ \exp(q(i)/2)}{N_b(q)},~~~~& i \neq 0,~~ b(i)= 1 \\
\mu_{i,j}\frac{1}{N_b(q)},~~~~&i \neq 0,~  b(i)= 0, i\rightarrow j \text{ or } j = 0.
\end{cases}
\end{equation}
Any smooth function $W:{\mathbb R}^d \rightarrow {\mathbb R}$ can be used
to define a stochastic kernel $\bar{p}$ as follows:
\begin{equation}\label{e:naive}
\bar{p}_W(v | x) = \bar{p}^*_{b_x}(v| DW(x/n)),
\end{equation}
where $DW$ is the gradient of $W$.

Theorem 4.1.1 of \cite{thesis} asserts that the IS transition kernel defined by 
smooth subsolutions to \eqref{e:DPE} satisfying growth conditions on their
Hessians are asymptotically optimal. For completeness we quote this theorem below.

\begin{theorem}[Theorem 4.1.1 of \cite{thesis}]\label{t:optimality}
Let $\{\bar{V}_n\}$ be a sequence of
$C^2([0,1]^d,{\mathbb R})$ functions that satisfy
1) $\bar{V}_n$ is a $\epsilon_n$-subsolution
2)
$ \left| \frac{\partial^2 \bar{V}_n}{\partial x_i\partial x_j} \right| 
\le \frac{C}{\delta_n}\text{ for }i,j \in \{1,2,...,d\},
$
for some fixed constant $C <\infty$ and
a pair of non negative sequences
$\{ \delta_n\}$ and $\{\epsilon_n\}$ that
converge to $0$ and satisfy
$n\delta_n  \rightarrow \infty$.
Then the IS scheme
defined by the subsolutions 
$\bar{V}_n$
is asymptotically optimal.
\end{theorem}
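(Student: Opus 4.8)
The plan is to run the subsolution verification argument of the Dupuis--Wang program: build from $\bar V_n$ and the running squared likelihood ratio a nonnegative near‑supermartingale, and read the decay rate $2\gamma$ of the second moment off the boundary conditions (b) and (c). It is worth noting first that a single change of measure gives $\hat{\mathbb E}[\hat p_n^2]=\hat{\mathbb E}[1_{A_n}L_n^2]=\mathbb E_{\mathbf s}[1_{A_n}L_n]$ with $L_n=\prod_{i<T_n}p(Y_i)/\bar p(Y_i|X_i)$, but I will argue directly under the sampling dynamics, carrying $R_k\doteq\prod_{i<k}\big(p(Y_i)/\bar p(Y_i|X_i)\big)^2$, so that $\hat p_n^2=1_{A_n}R_{T_n}$.

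The algebraic heart of the proof is the identity
\[
\sum_{v\in\mathcal V}\frac{p(v)^2}{\bar p^*_b(q)(v)}\,e^{-\langle q,\pi(x,v)\rangle}\;=\;N_b(q)^2\;=\;e^{-H_b(q)},
\]
valid for every bitmap $b$, every $q\in\mathbb R^d$ and every $x\in\mathbb R^d_+$ with $b_x=b$. It follows from \eqref{e:jump}: for an active jump $v$ one has $\bar p^*_b(q)(v)=p(v)\,e^{-\langle q,v\rangle/2}/N_b(q)$, so the $v$‑term is $p(v)\,e^{-\langle q,v\rangle/2}N_b(q)$; for a constrained jump ($b(i)=0$) one has $\pi(x,v)=0$ and $\bar p^*_b(q)(v)=p(v)/N_b(q)$, so the term is $p(v)N_b(q)$; summing and comparing with \eqref{e:Hams} shows the bracket rebuilds $N_b(q)$ exactly (the exponentials $e^{\pm q/2}$ reappear, and the constrained jumps at an empty node $i$ sum to $\mu_i$, matching $\sum_{i:b(i)=0}\mu_i$). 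Put $Z_k\doteq e^{-n\bar V_n(X_k/n)}R_k$, Taylor‑expand $\bar V_n\big((X_k+\pi(X_k,Y_k))/n\big)$ to first order around $X_k/n$, and bound the second‑order remainder by $|\partial^2\bar V_n|\le C/\delta_n$ together with the boundedness of the increments: after multiplication by $n$ the remainder is $O(1/(n\delta_n))=:\eta_n\to0$. Taking conditional expectations and using $\bar p(v|X_k)=\bar p^*_{b_{X_k}}\!\big(v\,|\,D\bar V_n(X_k/n)\big)$ from \eqref{e:naive}, the identity yields, on $\{T_n>k\}$,
\[
\hat{\mathbb E}\big[Z_{k+1}\mid\mathcal F_k\big]\;\le\;e^{\eta_n}Z_k\,N_{b_{X_k}}\!\big(D\bar V_n(X_k/n)\big)^2\;\le\;Z_k\,e^{\epsilon_n+\eta_n},
\]
the last step being subsolution condition (a). Hence $c_n\doteq\epsilon_n+\eta_n\to0$ and $\{Z_ke^{-kc_n}\}$ is a nonnegative supermartingale.

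It remains to stop at $T_n$. On $A_n$ the chain exits through $S_n$, so $X_{T_n}/n\in\mathcal S$ and (c) gives $\bar V_n(X_{T_n}/n)\le\epsilon_n$, whence $Z_{T_n}1_{A_n}\ge e^{-n\epsilon_n}\hat p_n^2$. Meanwhile $Z_0=e^{-n\bar V_n(\mathbf s/n)}$, and since $\mathbf s/n\to0$ and the subsolutions in question have gradients bounded uniformly in $n$ (being smoothed minima of affine functions with fixed slopes), condition (b) gives $\bar V_n(\mathbf s/n)\ge2\gamma-\epsilon_n-o(1)$, so $Z_0\le e^{-2n\gamma+o(n)}$. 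Optional stopping/Fatou applied to $\{Z_ke^{-kc_n}\}$ (splitting $\{T_n\le Mn\}$ from $\{T_n>Mn\}$ to absorb the factor $e^{T_nc_n}$ that appears when the discount is removed) then gives $\hat{\mathbb E}[\hat p_n^2]\le e^{o(n)}Z_0\le e^{-2n\gamma+o(n)}$, i.e.\ $\liminf_n-\tfrac1n\log\hat{\mathbb E}[\hat p_n^2]\ge2\gamma$; with the matching Jensen bound $\le2\gamma$ recorded in Section \ref{s:IS}, this is asymptotic optimality.

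I expect the two real difficulties to be precisely the ones the quantitative hypotheses address. First, going from the exact identity to the one‑step estimate needs the $O(n^{-2})$ Taylor remainder controlled \emph{uniformly in $n$} after scaling by $n$ — this is exactly what $|\partial^2\bar V_n|\le C/\delta_n$ with $n\delta_n\to\infty$ buys, and it is delicate because the $\bar V_n$ themselves move with $n$. Second, justifying the optional stopping — excluding a non‑negligible contribution of atypically long excursions — requires a tail estimate on $T_n$ under the sampling dynamics; one shows by a Lyapunov/drift argument that the IS scheme drives $X$ toward $S_n$, so it exits in $O(n)$ steps with only exponentially small overshoot probability, making $\hat{\mathbb E}[\hat p_n^2\,1_{T_n>Mn}]$ negligible against $e^{-2n\gamma}$ for $M$ large. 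Everything else — the identity, the use of the boundary conditions, and the logarithmic bookkeeping — is routine.
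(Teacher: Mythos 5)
First, a point of comparison: the paper does not prove Theorem \ref{t:optimality} at all --- it is quoted from \cite{thesis} ``for completeness'' --- so there is no internal proof to measure your argument against; the relevant benchmark is the verification argument of \cite{thesis,DSW}, and your sketch does follow that route. Your central identity $\sum_{v}\frac{p(v)^2}{\bar p^*_b(q)(v)}e^{-\langle q,\pi(x,v)\rangle}=N_b(q)^2=e^{-H_b(q)}$ is correct (the constrained jumps at empty nodes regenerate the $\sum_{i:b(i)=0}\mu_i$ term of \eqref{e:Hams}), the Taylor step with the Hessian bound $C/\delta_n$ and $n\delta_n\to\infty$ gives the per-step error $\eta_n\to 0$ exactly as you say, and conditions (a), (b), (c) enter where they should.

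The genuine gap is the final optional-stopping step. The supermartingale only yields ${\mathbb E}$-bounds of the form $\hat{\mathbb E}\bigl[\hat p_n^2\,e^{-T_nc_n}\bigr]\le e^{n\epsilon_n}Z_0$, and after splitting at $\{T_n\le Mn\}$ you are left with $\hat{\mathbb E}\bigl[\hat p_n^2 1_{\{T_n>Mn\}}\bigr]$, in which the squared likelihood ratio is unbounded, so a bare tail bound on $\hat P(T_n>Mn)$ does not suffice. Moreover the heuristic you offer for that bound --- ``the IS scheme drives $X$ toward $S_n$, so it exits in $O(n)$ steps'' --- is not accurate: near the origin the effective gradient is $0$, the sampling kernel \eqref{e:naive} coincides with the original stable dynamics and pushes the process toward $0$, and in intermediate regions the drift depends on the boundary bitmap; nothing in hypotheses 1)--2) hands you a uniform Lyapunov/drift estimate. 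Controlling the long-trajectory contribution (equivalently, removing the discount $e^{-T_nc_n}$) is precisely the technical core of the proof in \cite{thesis}/\cite{DSW} and requires an actual argument, e.g.\ uniform exponential moments of $T_n/n$ under the sampling measures or a subsolution modified by a small linear-in-time term. A second, minor, gap: you pass from $\bar V_n(0)\ge 2\gamma-\epsilon_n$ to $\bar V_n({\mathbf s}/n)$ by asserting uniformly bounded gradients ``because the subsolutions are smoothed minima of affine functions''; the theorem assumes no such structure, so this control must instead be extracted from condition (a) (which forces $N_{b}(D\bar V_n)\le e^{\epsilon_n/2}$ and hence, for the networks considered here, a uniform gradient bound) --- easy to repair, but as written it imports structure not in the hypotheses.
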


In the next section
we will construct a sequence of smooth subsolutions to \eqref{e:DPE} that satisfy the 
conditions of this theorem by piecing together at most $2^d$ affine 
functions for the buffer structure ${\mathcal S}_1$. We will find out in Section
\ref{s:rectangle} that the same sequence also works for  ${\mathcal S}_2$ 
(one individual buffer for each node).

\section{Single shared buffer}\label{s:shared}

In this section we will be working with 
${\mathcal S}= {\mathcal S}_1 =  \{x \in {\mathbb R}^d_+ : x(1) + x(2) + \cdots + x(d) = 1\}$.
As noted before, ${\mathcal S}_1$ corresponds to a single buffer shared by all queues in the system.
To remind the reader, we are interested in the overflow probability:
$
p_n \doteq P_{\bf s}( X \text{ hits } S_n \text{ before it hits } 0 ),
$
where 
and
$S_n \doteq \{x \in {\mathbb Z}^d_+: x/n \in {\mathcal S}_1 \}$.
It is proved in  \cite{GlassKou} that 
\begin{equation}\label{e:ldres}
\lim_{n\rightarrow \infty} -\frac{1}{n} \log p_n =  \gamma_1=  \min_{i} -\log \rho_i.
 \end{equation}
In particular, this implies that ${\mathcal S}_1$ satisfies the conditions of
Assumption \ref{as:bs}.

\subsection{The smooth subsolution}\label{ss:ssubsol}
We define the following quantities
to write down the subsolution 
to \eqref{e:DPE}
that
we have in mind.

\paragraph{The effective rate $M_i(b)$ of node $i$ at boundary $b$.}
\begin{equation}\label{e:Mib}
M_i(b) \doteq \begin{cases}
\mu_i, & \text{ if $b(i)=1$},\\
\min\left(\mu_i, 
\sum_{k:i\rightarrow k} 
M_k(b) + \mu'_{i,0}\right), & \text{ if $b(i)=0$},
\end{cases}
\end{equation}
where
$
\mu'_{i,0} \doteq \Lambda_i \frac{\mu_{i,0}}{\mu_i}
$
is the traffic that leaves the system through node $i$.
The recursive formula
\eqref{e:Mib} is the main ingridient of our construction and is suggested by the
definition of the Hamiltonians \eqref{e:Hams} and the HJB equation \eqref{e:DPE} to
which we are constructing a subsolution.
The form of \eqref{e:Mib} 
and the role $M_i(b)$ plays in the solution to the problem
suggests the following interpretation of \eqref{e:Mib}.
\eqref{e:Mib} seems to compute an ``effective'' 
service rate for each node taking into account whether the node is empty
or nonempty.
If a node is nonempty its effective service rate is simply
its service rate.
If the node is empty, \eqref{e:Mib} 
seems to consider it as a system whose components are the 
nodes it directly feeds and computes the effective rate as the total effective rates
of the components. There is also an upper bound on the effective rate, namely the
service rate and if the aforementioned total exceeds this bound then again the effective
rate is set to be the service rate. In this interpretation
$\mu_{i,0}'$ can be thought of as the effective rate of outside of the 
network for the empty node $i$. 

\paragraph{The effective utility $\rho_i(b)$}
$
\doteq  \frac{\Lambda_i}{M_i(b)}.
$
The effective utility of a node is the ratio of its arrival rate to its effective service rate.
If node $i$ is nonempty then it coincides with the ordinary utility $\rho_i$. 

\paragraph{The effective gradient $q\in {\mathbb R}^d$ 
associated with boundary 
$b$.}
\begin{equation}\label{e:qi}
q(i) \doteq 2 \log \rho_i(b) = 2\log \frac{\Lambda_i}{M_i(b)},
\end{equation}
where $q(i)$ denotes the $i^{th}$ component of the vector $q$.
We will use the affine functions defined by the effective gradients to construct
our subsolution of \eqref{e:DPE}. The effective gradient $q$ of the boundary
$b$ will be the gradient of the smooth subsolution around that boundary.

For each boundary $b$ there is an effective gradient $q$. It may happen
that two boundaries $b_1$ and $b_2$ have the same effective gradients.
Let $EG\doteq\{q_1$, $q_2$,...,$q_L\}$, $L \le 2^d $,  be the set of unique effective 
gradients.
We identify two extreme elements of the set $EG$: 
firstly,
the effective gradient corresponding to the boundary $0=(0,0,0,\dots,0,0)$ (all nodes empty)
is $0=(0,0,0,\dots,0,0)$ (this follows from
\eqref{e:Mib} and the definition of $\mu'_{i,0}$). Secondly,
the effective gradient corresponding to the boundary $1=(1,1,1,\dots,1,1)$ (all nodes non-empty)
is the vector whose $i^{th}$ component is $\log \Lambda_i/\mu_i$.

Now define
\begin{equation}\label{e:defmib}
m_i(b) \doteq \begin{cases}
\mu_i, & \text{ if $b(i)=1$},\\
\sum_{k:i\rightarrow k} m_k(b) + \mu'_{i,0}, & \text{ if $b(i)=0$}.
\end{cases}
\end{equation}
The simple gradient $q=(q_1,q_2,...,q_d)$ associated with boundary 
$b$ is defined as 
$
q(i) \doteq 2\log \frac{\Lambda_i}{m_i(b)}
$
where as before $\Lambda_i$ is the arrival rate to node $i$.
The following lemma relates simple and effective gradients.
Bitmaps $b'$ and $b$ satisfy $b' \ge b$ if $b'(i) \ge b(i)$ for all 
$i \in \{1,2,3,...,d\}.$
\begin{lemma}\label{l:effsimp}
Let $q$ be the effective gradient associated with boundary $b$.
Then
there exists a boundary $\bar{b} \ge b$ such that $q$ is the simple
gradient associated with $\bar{b}$.
\end{lemma}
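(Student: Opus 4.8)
The plan is to compare the two recursions \eqref{e:Mib} and \eqref{e:defmib} and exploit the fact that the only difference between them is the extra $\min(\mu_i,\cdot)$ in the definition of $M_i(b)$. Starting from the given boundary $b$, I would build $\bar b$ by "switching on" exactly those empty nodes at which the minimum in \eqref{e:Mib} is attained by the first argument $\mu_i$ (i.e., where the cap is active), and leaving the other empty nodes off. Concretely, define $\bar b(i) = 1$ if $b(i)=1$, or if $b(i)=0$ and $M_i(b) = \mu_i \le \sum_{k:i\to k} M_k(b) + \mu'_{i,0}$; and $\bar b(i)=0$ if $b(i)=0$ and the strict inequality $\mu_i > \sum_{k:i\to k} M_k(b) + \mu'_{i,0}$ holds. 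Clearly $\bar b \ge b$. The claim is then that $m_i(\bar b) = M_i(b)$ for every $i$, which immediately gives that the simple gradient of $\bar b$ equals the effective gradient of $b$ componentwise, via \eqref{e:qi} and the definition of the simple gradient.

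The identity $m_i(\bar b) = M_i(b)$ I would prove by induction on the tree, working from the leaves toward the root (so that $M_k$ and $m_k$ for all children $k$ of $i$ are already known when we reach $i$). There are three cases. If $b(i)=1$, then $\bar b(i)=1$, so $m_i(\bar b)=\mu_i=M_i(b)$. If $b(i)=0$ and the cap is active at $i$, then $\bar b(i)=1$, so $m_i(\bar b)=\mu_i$, and by definition of this case $M_i(b)=\mu_i$ as well. If $b(i)=0$ and the cap is not active, then $\bar b(i)=0$, so $m_i(\bar b)=\sum_{k:i\to k} m_k(\bar b) + \mu'_{i,0}$; by the induction hypothesis applied to the children this equals $\sum_{k:i\to k} M_k(b) + \mu'_{i,0}$, which in this case equals $M_i(b)=\min(\mu_i,\sum_{k:i\to k}M_k(b)+\mu'_{i,0})$ since the minimum is the second argument. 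One subtlety to check is that the induction hypothesis is legitimately available for the children: the subtree rooted at each child $k$ of $i$ is again a tree, and the construction of $\bar b$ restricted to that subtree depends only on $b$ restricted to that subtree, so the recursion closes properly.

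The main obstacle I anticipate is purely bookkeeping: one has to make sure that switching node $i$ from $0$ to $1$ in $\bar b$ does not retroactively change the values $m_k(\bar b)$ at descendants $k$ of $i$ (it does not, since $m_k$ depends only on the bitmap values at $k$ and its own descendants, never on ancestors), and conversely that the value $m_j(\bar b)$ at an ancestor $j$ of $i$ correctly "sees" the switched value. Both of these are automatic from the tree structure and the top-down form of the recursion, but they deserve an explicit sentence. A second point worth a remark is that $\bar b$ as constructed is in fact the \emph{minimal} bitmap $\ge b$ whose simple gradient equals the effective gradient of $b$, though the lemma only asserts existence, so I would not belabor this. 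With these observations the proof is a short structural induction.
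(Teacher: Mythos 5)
Your construction of $\bar{b}$ (turning on exactly those empty nodes $i$ with $M_i(b)=\mu_i$) is precisely the one used in the paper's proof, and your leaf-to-root induction showing $m_i(\bar b)=M_i(b)$ correctly fills in the step the paper dismisses as ``clear.'' The proposal is correct and takes essentially the same approach as the paper.
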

\begin{proof}
If $b=(1,1,1,...,1,1)$ then there is nothing to prove because for this boundary
the effective gradient and the simple gradient are the same.
Then we assume that there are some empty nodes indicated by $b$. $\bar{b}
\ge b$ is constructed as follows. Initially set $\bar{b} = b$. 
For each empty node $i$ in $b$
set $\bar{b}_i$ to $1$ if 
$M_i(b) = \mu_i.$
(see  \eqref{e:Mib}).
It is clear that  1) $\bar{b} \ge b$ and  2)  the 
effective and simple gradients of $\bar{b}$ are the same vector which
is the effective gradient of $b$.
\end{proof}
\begin{definition}\label{d:defa}
For an effective gradient $q_l \in EG$ let $\bar{b}$ be the boundary
whose simple gradient equals $q_l$. Define $\alpha_l$ to be 
the number of $0$'s in $\bar{b}$ plus $1$. 
\end{definition}The $\alpha_l$'s will determine the size
of the regions where the change of measure defined by $q_l$ is used for IS.
Now define the piecewise affine subsolution
\begin{equation}\label{e:daf}
W^{\epsilon}_l(x) = 2\gamma_1  -\alpha_l\epsilon + \langle q_l, x \rangle,~~ W^{\epsilon}(x)= \bigwedge_{l=1}^L W^{\epsilon}_l(x),
\end{equation}
where $L$ is the number of effective gradients and $q_l$ are the effective gradients.
$W^{\epsilon}$ is piecewise affine and not smooth in general.
To obtain the sequence of smooth subsolutions satisfying the assumptions of 
Theorem 4.1.1 of \cite{thesis} one has to let $\epsilon$ depend on $n$ and then 
smooth $W^{\epsilon}$.
One smoothing method that is simple and easy to implement on a computer
is the following \cite{duphui-is3}. Define
\begin{equation}\label{e:Wep}
W^{\epsilon,\delta}(x)  \doteq -\delta 
\log\sum_{l=1}^L \exp\left\{-\frac{1}{\delta} W^{\epsilon}_l(x)\right\}.
\end{equation}
This smoothing algorithm is based on the following fact:
For
$d$ real numbers $a_1$, $a_2$ ,..., $a_d$:
$
-\lim_{\delta\rightarrow 0 } 
\delta \log\left( \sum_{i=1}^d e^{-a_i/\delta }\right) = 
\bigwedge_{i=1}^d a_i.
$
By Lemma 3.12 of \cite{DSW},
$W^{\epsilon,\delta} \rightarrow W^{\epsilon}$ uniformly
as $\epsilon \rightarrow 0$. In addition,
$W^{\epsilon,\delta}$ is continuously differentiable
and a simple direct calculation gives
\begin{equation}\label{e:wi}
DW^{\epsilon,\delta}(x)  = \sum_{l=1}^L w^{\epsilon,\delta}_l(x) q_l,
~~
w_l^{\epsilon,\delta}(x) \doteq 
\frac{\exp\left\{-{W}_l^\epsilon(x)/\delta\right\}}
{\sum_{k=1}^L \exp\left\{- {W}_k^\epsilon(x)/\delta \right\}}.
\end{equation}

\begin{lemma}\label{l:main}
$W^{\epsilon,\delta}$ defined in \eqref{e:Wep} satisfies:
\begin{enumerate}
\item $ H_{b_x}(DW^{\epsilon,\delta}(x)) \ge -C_1 \exp\left(-\frac{\epsilon}{\delta}\right),$
\item $W^{\epsilon,\delta}(0) \ge 2\gamma_1 - \epsilon 
\left(\frac{\delta}{\epsilon}\log\sum_{l=1}^L \exp\left\{\frac{\alpha_l}{\delta/\epsilon}\right\}\right),$
\item $W^{\epsilon,\delta}(x) \le 0$ for $x \in {\mathcal S}_1$,
\item 
$ \left|\frac{\partial^2 W^{\epsilon,\delta}}{\partial x_i \partial x_j }\right|
\le \frac{C_2}{\delta},
$
\end{enumerate}
where $C_1$ and $C_2$ are constants that only depend on the parameters of the network (arrival and service rates
and the routing probabilities).
\end{lemma}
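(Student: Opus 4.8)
Parts (2)--(4) are quick. For (2), evaluate \eqref{e:Wep} at the origin: $W^{\epsilon}_l(0)=2\gamma_1-\alpha_l\epsilon$, so $W^{\epsilon,\delta}(0)=-\delta\log\sum_{l=1}^L e^{-(2\gamma_1-\alpha_l\epsilon)/\delta}=2\gamma_1-\delta\log\sum_{l=1}^L e^{\alpha_l\epsilon/\delta}$, which is exactly the claimed lower bound (in fact an equality). For (3), keep only the $l^{*}$ term inside the logarithm in \eqref{e:Wep}, where $l^{*}$ indexes the all-nonempty effective gradient $q_{l^{*}}$, $q_{l^{*}}(i)=2\log\rho_i$, for which $\alpha_{l^{*}}=1$: then $W^{\epsilon,\delta}(x)\le W^{\epsilon}_{l^{*}}(x)=2\gamma_1-\epsilon+\langle q_{l^{*}},x\rangle$, and on $\mathcal S_1$ one has $\langle q_{l^{*}},x\rangle\le(\max_i 2\log\rho_i)\sum_i x(i)=-2\gamma_1$ by \eqref{e:ldres}, so $W^{\epsilon,\delta}(x)\le-\epsilon\le 0$. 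For (4), differentiate \eqref{e:wi} once more; since each $W^{\epsilon}_l$ is affine with gradient $q_l$, the Hessian of $W^{\epsilon,\delta}$ is $1/\delta$ times the covariance matrix of the fixed vectors $q_1,\dots,q_L$ under the weights $w^{\epsilon,\delta}_1(x),\dots,w^{\epsilon,\delta}_L(x)$, so every entry is at most a network constant over $\delta$, i.e. the bound $C_2/\delta$ with $C_2$ depending only on the rates ($L\le 2^d$ and each $q_l$ is determined by them).

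The content of the lemma is (1). Since $H_{b_x}=-2\log N_{b_x}$ and $N_b(\cdot)$ is a positive combination of exponentials of linear functions, hence convex, writing $b:=b_x$ and $DW^{\epsilon,\delta}(x)=\sum_l w^{\epsilon,\delta}_l(x)q_l$ with $\sum_l w^{\epsilon,\delta}_l(x)=1$ yields $N_b(DW^{\epsilon,\delta}(x))\le\sum_l w^{\epsilon,\delta}_l(x)\,N_b(q_l)$. If this is at most $1+Ce^{-\epsilon/\delta}$, then $H_{b_x}(DW^{\epsilon,\delta}(x))\ge-2\log(1+Ce^{-\epsilon/\delta})\ge-2Ce^{-\epsilon/\delta}$, i.e. (1) with $C_1=2C$. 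I would call $q_l$ \emph{admissible at $b$} when $N_b(q_l)\le 1$ and treat the admissible and inadmissible terms separately.

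For the admissible terms the cornerstone is the identity $N_{\bar b_l}(q_l)=1$, where $\bar b_l$ is the boundary furnished by Lemma \ref{l:effsimp} whose simple gradient is $q_l$. This I would prove by a telescoping computation over the tree: from $\Lambda_1=\lambda$, $\Lambda_j=\Lambda_i\mu_{i,j}/\mu_i$ and $q_l(i)=2\log(\Lambda_i/m_i(\bar b_l))$ one checks that the arrival term of $N_{\bar b_l}(q_l)$ equals $m_1(\bar b_l)$, the summand attached to a nonempty node $i$ of $\bar b_l$ equals $\sum_{i\to j}m_j(\bar b_l)+\mu'_{i,0}$, and the summand attached to an empty node $i$ is $\mu_i$; summing over the tree and using both the recursion \eqref{e:defmib} for $m_i$ on the empty nodes and the conservation identity $\sum_i\mu'_{i,0}=\lambda$, the whole expression collapses to $\lambda+\sum_i\mu_i=1$. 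Next, since each summand of $N_b(q_l)$ depends only on the single bit $b(i)$ and the minimum in \eqref{e:Mib} makes the relevant single-bit flips monotone, one obtains $N_b(q_l)\le N_{\bar b_l}(q_l)=1$ for every $b$ dominating the support of some boundary with effective gradient $q_l$; for the remaining $l$ the crude bound $N_b(q_l)\le\bar N:=\max_{l,b}N_b(q_l)$, a network constant, is enough.

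It remains to control the inadmissible terms, for which the goal is $w^{\epsilon,\delta}_l(x)\le e^{-\epsilon/\delta}$ whenever $N_b(q_l)>1$ and $b_x=b$; granting this, $\sum_l w^{\epsilon,\delta}_l(x)N_b(q_l)\le\sum_{\mathrm{adm.}}w^{\epsilon,\delta}_l(x)+\bar N\sum_{\mathrm{inadm.}}w^{\epsilon,\delta}_l(x)\le 1+\bar N L\,e^{-\epsilon/\delta}$ and (1) is done. The plan is to exhibit, for each inadmissible $l$, an admissible gradient $q_{l'}$ — the effective gradient of a suitable sub-boundary $c'\le b$ — with $q_{l'}(i)\le q_l(i)$ at every nonempty node $i$ of $b$ and with $\alpha_{l'}\ge\alpha_l+1$; since $x(i)=0$ at the empty nodes of $b$, this forces $W^{\epsilon}_{l'}(x)-W^{\epsilon}_l(x)=(\alpha_l-\alpha_{l'})\epsilon+\sum_{i:b(i)=1}(q_{l'}(i)-q_l(i))x(i)\le-\epsilon$, hence $w^{\epsilon,\delta}_l(x)\le e^{(W^{\epsilon}_{l'}(x)-W^{\epsilon}_l(x))/\delta}\le e^{-\epsilon/\delta}$. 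This last step is the main obstacle: identifying $c'$ and verifying \emph{simultaneously} the componentwise domination $q_{l'}(i)\le q_l(i)$ on the nonempty coordinates of $b$ and the strict gain $\alpha_{l'}\ge\alpha_l+1$ is precisely the bookkeeping that the multiplicities $\alpha_l$ of Definition \ref{d:defa} and the flip-up rule of Lemma \ref{l:effsimp} were engineered for, and carrying it out appears to require an induction on the tree exploiting the monotonicity of $b\mapsto M_i(b)$ and the exact form of \eqref{e:Mib}. (The telescoping identity for the admissible terms, though routine, also needs care in tracking which summands of $N_b$ occur in the $\mu_i$ form and which in the exponential form.)
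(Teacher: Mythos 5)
Parts (2)--(4) of your proposal are correct and essentially reproduce the paper's own computations: (2) is an equality obtained by evaluating \eqref{e:Wep} at $0$, (3) bounds the smoothed minimum by the single affine piece attached to the all-nonempty effective gradient and uses \eqref{e:ldres}, and (4) is the covariance-type identity obtained by differentiating \eqref{e:wi}. For part (1), your reduction through the convexity of $N_{b_x}$ (instead of the paper's concavity and Lipschitz continuity of $H_{b_x}$, applied to the sub-convex combination of the ``good'' gradients) is a legitimate variant, and your treatment of the admissible terms is in substance the paper's Lemmas \ref{l:effsimp}, \ref{l:simpprop} and \ref{l:egcomp}: the telescoping identity $N_{\bar b_l}(q_l)=1$ and the fact that, because of the minimum in \eqref{e:Mib}, flipping empty bits up can only increase $N_{\cdot}(q_l)$, so $N_{b_x}(q_l)\le 1$ whenever $q_l$ is the effective gradient of some boundary below $b_x$.

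The genuine gap is the one you flag yourself: the bound $w^{\epsilon,\delta}_l(x)\le e^{-\epsilon/\delta}$ for the remaining gradients is the heart of item 1 and is left as a plan. Worse, the sufficient condition you propose to verify --- an effective gradient $q_{l'}$ of a sub-boundary $c'\le b_x$ with $q_{l'}(i)\le q_l(i)$ at every nonempty coordinate of $b_x$ and $\alpha_{l'}\ge\alpha_l+1$ --- points in the wrong direction and will in general be unsatisfiable: $M_i(\cdot)$ is nondecreasing in the bitmap, so effective gradients of smaller boundaries have componentwise \emph{larger} entries (the extreme case is the all-empty boundary, whose effective gradient is $0$, dominating every other effective gradient from above while carrying the largest $\alpha$). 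The paper's proof does something different: for a bad $q_l$ it takes $r$, the bitmap whose simple gradient is $q_l$ (Definition \ref{d:defa}), forms $\tilde r$ by zeroing those coordinates of $r$ at which $b_x$ vanishes, and compares with $q_{j_0}$, the effective gradient of $\tilde r$. Then $\tilde r\le b_x$, the strict inequality $\tilde r<r$ (otherwise $q_l$ would be admissible) yields $\alpha_{j_0}\ge\alpha_l+1$, and --- this is the substitute for your componentwise domination --- $q_{j_0}$ and $q_l$ are argued to \emph{agree} on the coordinates where $x$ is nonzero, so $\langle q_{j_0},x\rangle=\langle q_l,x\rangle$ and $W^{\epsilon}_{j_0}(x)\le W^{\epsilon}_l(x)-\epsilon$ follows from the $\alpha$-gain alone; verifying that agreement and the $\alpha$-gain is exactly where the recursion \eqref{e:Mib} and the flip-up rule of Lemma \ref{l:effsimp} are used. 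Without this construction (or a genuine substitute establishing $\langle q_{j_0}-q_l,x\rangle\le(\alpha_{j_0}-\alpha_l-1)\epsilon$ for some comparison index), item 1 is not proved.
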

The proof of Lemma \ref{l:main} is in Appendix \ref{a:proof}.
This lemma directly implies
that, for
$\epsilon_n = -\delta_n \log\delta_n$ and $\delta_n$ 
chosen such that $\delta_n \rightarrow 0$
and $n\delta_n \rightarrow \infty$,
the sequence of smooth subsolutions
$W^{\epsilon_n,\delta_n}$ (where $W^{\epsilon,\delta}$ is 
defined as in \eqref{e:Wep})
satisfy the conditions of the optimality
Theorem 4.1.1 \cite{thesis}. This means that the IS scheme defined
by these subsolutions through \eqref{e:naive} is asymptotically
optimal.

Here we repeat an idea from \cite{duphui-is3,DSW}.
The formula \eqref{e:naive} can be used to translate any smooth function into an
IS transition kernel.
However, for the smooth subsolutions there is a slightly different way of defining IS transition
kernels which turn out to be very convenient in computer simulations.

For $x \in {\mathbb Z}_+^d$ define
\begin{equation}\label{e:directav}
\bar{p}^*(v_{i,j}|x) = \sum_{l=1}^L w_l^{\epsilon,\delta}(x/n)\bar{p}_{b_x}^*(q_l)
(v_{i,j}),
\end{equation}
i.e., we switch the order of taking the average against the weights 
$w_l^{\epsilon,\delta}$ and applying the map $\bar{p}^*_{b_x}(\cdot)$ of 
\eqref{e:jump}. 
The advantage of $\bar{p}^*$ of \eqref{e:directav} is that it requires the
computation of $\bar{p}_b^*(q_l)$ only once
at the beginning of the estimation procedure. During the simulation only
the weights are computed dynamically and averages of the precomputed 
$\bar{p}_b^*(q_l)$ will be the IS rates.
Theorem 4.1.1 of \cite{thesis} doesn't cover this way of computing the IS
rates. However, the modification of this theorem to accommodate direct
averaging entails no significant changes.
In the next section we report on the numerical performance of these algorithms.
\subsection{Interpretation of the IS algorithm defined by the subsolution}
Let $b$ a boundary and $q$ its effective gradient.
\eqref{e:directav} essentially uses $\bar{p}_b(q)$ as the IS change of measure
when the queueing process is on the boundary $b$ and away from 
the lower dimensional
boundaries contained in $b$.
Looking at \eqref{e:qi} and \eqref{e:jump}
one sees that $\bar{p}_b(q)$ is simply the following change of measure:
\begin{equation}\label{e:simple}
\bar{\mu}_{i,j} =\begin{cases} \mu_{i,j}, ~~&\text{if node $i$ is empty},\\
	\mu_{i,j}
	\frac{\rho_i(b)}{\rho_j(b)}
	, ~~&\text{if node $i$ is nonempty},
\end{cases}
\end{equation}
where $\rho_i(b)$ and $\rho_j(b)$ are the effective utilities of nodes $i$ and $j$.
These new rates are renormalized so that they sum to $1$.
By convention $\rho_0(b) = 1$, i.e., the outside of the system is thought of as a node with
utility $1$. 
The IS scheme given by \eqref{e:directav} uses a convex combination of \eqref{e:simple}
when the simulated queuing process transitions from one boundary to another.

\eqref{e:simple} illustrates well how the IS change of measure given 
by the subsolution approach works. In the course of a simulation,
the IS change of measure depends on
which nodes are currently empty and nonempty.
The service probabilities of empty nodes are not modified. 
The service probability $\mu_{i,j}$ of a
nonempty node $i$ is modified through a comparison of the traffic 
at the source $i$ and the target $j$;
the service rate is increased if the source is busier,
decreased otherwise. 
The goal seems
to be
to direct traffic to the less strained node.
The traffic is measured by the effective utilities.
For an empty node the effective utility is a value that takes into account
the traffic in the nodes that follow it immediately.
We also note that the arrival rate $\lambda$ is replaced by
$\bar{\lambda} = \lambda\frac{1}{\rho_1(b)}$ which is always larger
than $\lambda$. Therefore the rate of traffic from outside is always increased. Similarly,
the rate of traffic to outside is always decreased.

We would like to also note that the standard state independent heuristic IS algorithms based on 
large deviations results can be thought of as variants of \eqref{e:simple} in which the
standard utilities are used instead of the effective utilities.

\section{Numerical Results}\label{s:numerical}
\paragraph{Choice of $\epsilon$ and $\delta$.}
The IS algorithm defined by $W^{\epsilon,\delta}$ of \eqref{e:Wep} has two parameters $\epsilon$
and $\delta$.
The optimality Theorem \ref{t:optimality} suggest
$\delta \approx C/n$ and $\epsilon \approx -\delta \log \delta$. Asymptotic optimality
criterion is not precise enough to impose a value for $C$. For the choice of this constant
we used experimental evidence.

Once $\epsilon$ and $\delta$ are fixed,  
$\bar{p}^*(v|x)$ of \eqref{e:directav} is used as the IS change of measure.
The effective gradients $q_1,q_2,...,q_L$ and their $\alpha_l$'s
are computed by iterating over all boundaries  $b$ and 
computing the effective gradient of
each of them using the formulas \eqref{e:Mib} and \eqref{e:qi} and the Definition
\ref{d:defa}.

In the following subsections we present simulation results for various
Jackson networks with a tree topology. In all the estimations $K=10000$ sample paths
were used.

\paragraph{Example 1.}
We first consider the network in Figure \ref{f:n11}.
\ninseps{n11}{Example 1}{0.6}
Let us consider the case when
$\lambda = 0.04, \mu_{1,2} =\mu_{1,0}= 0.12,$
$\mu_{2,0} = \mu_{2,3} = \mu_{2,4} = 0.08,$
$\mu_{3,0} = \mu_{3,1} = \mu_{4,0} = \mu_{4,1}= 0.12.$
The node utilities in this case are:
$ \rho_1 = 1/6$, $\rho_2 = 1/12$, $\rho_3 =\rho_4 = 1/36.$
In this example, the utilities are unevenly distributed and
node 1 is the most strained node.
We take $n=30$. 
For $n=30$, and with this four dimensional system, it is possible
to compute
$p_{30}$ without any simulation using the Markov property and straight-forward iteration. 
Such a computation yields $p_{30}=3.269 \times 10^{-23}$.
For the subsolution based IS algorithm we take $\epsilon=0.25$ and $\delta=0.08$.
There turns out to be only five effective gradients for the
given rate values above. 
\begin{table}
\begin{center}
{\tiny Exact probability $p_{30} = 3.269 \times 10^{-23}$

\begin{tabular}{|l|c|c|c|}
\hline & Estimate $\hat{p}_n$ & Standard Error & 95 \% CI\\
 \hline
Est. 1 & $3.50 \times 10^{-23}$& $0.19 \times 10^{-23}$& $[ 3.12,3.88] \times 10^{-23}$\\
\hline 
Est. 2 & $3.22 \times 10^{-23}$& $0.16 \times 10^{-23}$& $[ 2.89,3.54] \times 10^{-23}$\\
\hline 
Est. 3 & $3.28 \times 10^{-23}$& $0.17 \times 10^{-23}$& $[ 2.94,3.61] \times 10^{-23}$\\
\hline 
Est. 4 & $3.32 \times 10^{-23}$& $0.17 \times 10^{-23}$& $[ 2.98,3.66] \times 10^{-23}$\\
\hline 
Est. 5 & $3.16 \times 10^{-23}$& $0.16 \times 10^{-23}$& $[ 2.84,3.48] \times 10^{-23}$\\
\hline 
\end{tabular}
}
\end{center}

\vspace{-0.5cm}
\caption{Simulation Results for Example 1}
\label{t:sim2}
\vspace{-0.5cm}
\end{table}
The results of five consecutive estimations
using the subsolution based IS algorithm are displayed in Table \ref{t:sim2}.
The `standard error' column is the standard error of each estimation. The
$95\%$ confidence intervals are $\hat{p}^n+[-2SE, 2SE]$, where $SE$ is the
standard error displayed under the standard error column . These intervals
are only formal, i.e., we make no assertion about the normality of these errors.
Note that the estimation results are very close to the exact value and the ``$95\%$
confidence intervals'' are accurate: in all these estimations the exact
value happened to be in the computed confidence interval.
In total all five estimations took around 20 seconds on an ordinary laptop 
manufactured in 2004.

\paragraph{Example 2.}
Now we look at the 8-node network depicted in Figure \ref{f:n31}. 
\ninseps{n31}{Example 3}{0.5}
We take the arrival rate $\lambda = 0.1248$, The service rates
are taken to be:
$ \mu_{1,2} = 0.062442 $,
$ \mu_{1,3} = 0.1874$,
$ \mu_{1,4} = 0.062442 $
$ \mu_{1,0} = 0.062517 $
$\mu_{2,0} = 0.06$, 
$\mu_{3,0} = 0.036$, 
$\mu_{3,5} = 0.072$, 
$\mu_{3,6} = 0.072$, 
$\mu_{4,0} = 0.03$,
$\mu_{4,7} = 0.03$,
$\mu_{5,0} = 0.0365$,
$\mu_{5,8} = 0.0365$,
$\mu_{6,0} = 0.073$,
$\mu_{7,0} = 0.025$,
$\mu_{8,0} = 0.028$.
For this choice of the network parameters, the utility of each node turns out to be approximately:
$\rho_1 =  0.331738,$ $\rho_2= 0.3465,$ $\rho_3= 0.3466,$ $\rho_4=0.3465$ , $\rho_5 =0.3419$,
$\rho_6=0.3466,$ $\rho_4=0.3465,$ $\rho_8 = 0.4158.$
All nodes are similarly utilized, although the load on node 8 is slightly heavier then the
rest.
A straightforward simulation with $10^{8}$ samples 
estimate $p_{30}$ to be  $1.2 \times 10^{-6}$ with a standard
error of $1.1\times 10^{-6}$. The subsolution based IS simulation results
are given in Table \ref{t:8n}. The parameters of the algorithm are taken
to be $\epsilon= 0.4$ and $\delta = 0.1$.
Each estimation uses 10000 samples. For this network there are $256$ effective 
gradients. Total run time for all these five estimations was about 20 minutes.

\begin{table}
\begin{center}
{\tiny
\begin{tabular}{|l|c|c|c|}
\hline & Estimate $\hat{p}_n$ & Standard Error & 95 \% CI\\
 \hline
Est. 1 & $1.11 \times 10^{-6}$& $0.17 \times 10^{-6}$& $[ 0.78,1.44] \times 10^{-6}$\\
\hline
Est. 2 & $1.69 \times 10^{-6}$& $0.32 \times 10^{-6}$& $[ 1.04,2.34] \times 10^{-6}$\\
\hline
Est. 3 & $1.25 \times 10^{-6}$& $0.18 \times 10^{-6}$& $[ 0.89,1.61] \times 10^{-6}$\\
\hline
Est. 4 & $1.94 \times 10^{-6}$& $0.51 \times 10^{-6}$& $[ 0.92,2.97] \times 10^{-6}$\\
\hline
Est. 5 & $1.23 \times 10^{-6}$& $0.17 \times 10^{-6}$& $[ 0.89,1.56] \times 10^{-6}$\\
\hline
\end{tabular}

\vspace{-0.2cm}
\caption{Simulation results for the network with eight nodes}
\label{t:8n}
}
\end{center}

\vspace{-0.7cm}

\end{table}

As can be seen, the subsolution based IS algorithm performs very well
for this high dimensional system too: the estimate is within the $95\%$ confidence
interval of the MC estimator and the formal $95\%$ confidence inervals
of the IS simulation do not wildly fluctuate.

\section{Individual Buffers for each Node}\label{s:rectangle}
In this section we look at the buffer structure ${\mathcal S}_2$:
for $\beta \in {\mathbb R}^d_+$ 
$$
{\mathcal S}_2 = \{x \in {\mathbb R}^d_+ : x(i) = \beta(i) \text{ for some $i$ and } x(j) \le \beta(j) \text{ for all }j   \}.
$$
As we noted before,
$S_n \doteq \{x \in {\mathbb Z}^d_+: x/n \in {\mathcal S}_2 \}$ corresponds to $d$ independent buffers,
one for each node. The size of the buffer for node $i$ is given by $n\beta(i)$.
Without loss of generality we will assume that $\vee_i \beta(i) = 1.$
We are, as before, interested in:
$
p_n \doteq P_{\bf s}( X \text{ hits } S_n \text{ before it hits } 0 ),
$
where
$
{\mathbf s} = (1,0,0,\dots,0). 
$
One can prove, using arguments similar to those in  \cite{GlassKou} that 
\begin{equation}\label{e:rectrate}
\lim_{n\rightarrow \infty} -\frac{1}{n} \log p_n =  \gamma_2 =  \min_{i} -\beta(i)\log\rho_i,
\end{equation}
where $\rho_i$ are the node utilities.
In particular, this implies that ${\mathcal S}_2$ satisfies the conditions of
Assumption \ref{as:bs}.
Our goal now is to prove that 
the IS algorithm defined by $W^{\epsilon_n,\delta_n}$ is asymptotically optimal
for the buffer structure ${\mathcal S}_2$ as well
(when buffer structure is changed to ${\mathcal S}_2$, $\gamma_1$ in \eqref{e:daf} needs to be replaced with $\gamma_2$).
To prove this, it is enough to prove a version of Lemma \ref{l:main} for ${\mathcal S}_2$.
Note that only item 3 of this lemma depends on ${\mathcal S}$ and therefore we 
only have to prove that the same item holds for ${\mathcal S}_2$, which
is done in the next lemma.
\begin{lemma}\label{l:rectangle}
Define 
$
W^{c,\epsilon}_l(x) \doteq 2\gamma_2 -\alpha_l\epsilon + \langle q_l,x \rangle,
$
where $\alpha_l$ and $q_l$ are defined as in \eqref{e:qi} and Definition \ref{d:defa}
and $\gamma_2$ is the large deviation rate associated with the boundary ${\mathcal S}_2$ \eqref{e:rectrate}.
Define $W^{\epsilon,\delta}$ by the expression \eqref{e:Wep}. Then:
$
W^{\epsilon,\delta}(x) \le 0.
$
for $x \in {\mathcal S_2}$.
\end{lemma}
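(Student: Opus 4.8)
The plan is to reduce the claim, via the soft-min inequality, to a statement about a single affine piece. Recall the elementary fact behind the smoothing \eqref{e:Wep}: since $\sum_{l=1}^L e^{-a_l/\delta}$ dominates each of its (positive) summands, $-\delta\log\sum_{l=1}^L e^{-a_l/\delta}\le\bigwedge_{l=1}^L a_l$ for all $\delta>0$ and all real $a_1,\dots,a_L$. Applying this with $a_l=W^{c,\epsilon}_l(x)$ gives $W^{\epsilon,\delta}(x)\le\bigwedge_{l=1}^L W^{c,\epsilon}_l(x)$, so it suffices to exhibit, for every $x\in{\mathcal S}_2$, a single index $l$ with $W^{c,\epsilon}_l(x)\le 0$.

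For this I would use the effective gradient $q$ associated with the boundary $b=(1,1,\dots,1)$, i.e.\ the vector with $q(i)=2\log\rho_i$ (by \eqref{e:Mib} and \eqref{e:qi}, since $M_i(b)=\mu_i$ for this $b$); it belongs to $EG$ by the remark following \eqref{e:qi}. Fix $x\in{\mathcal S}_2$ and pick an index $i_0$ with $x(i_0)=\beta(i_0)$, which exists by the definition of ${\mathcal S}_2$. Stability gives $\log\rho_i\le 0$ for every $i$, and $x\in{\mathbb R}^d_+$ gives $x(i)\ge 0$; hence each term with $i\neq i_0$ in $\langle q,x\rangle=\sum_{i=1}^d 2(\log\rho_i)x(i)$ is nonpositive, so
$$\langle q,x\rangle\le 2(\log\rho_{i_0})x(i_0)=2\beta(i_0)\log\rho_{i_0}\le\max_{i}2\beta(i)\log\rho_i=-2\gamma_2,$$
where the last equality is the definition \eqref{e:rectrate} of $\gamma_2$.

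Plugging this in, $W^{c,\epsilon}_l(x)=2\gamma_2-\alpha_l\epsilon+\langle q,x\rangle\le 2\gamma_2-\alpha_l\epsilon-2\gamma_2=-\alpha_l\epsilon\le 0$, since $\alpha_l\ge 1$ (Definition \ref{d:defa}) and $\epsilon\ge 0$; combined with the first step this yields $W^{\epsilon,\delta}(x)\le 0$ for all $x\in{\mathcal S}_2$, as claimed. Structurally this is the same computation as for item~3 of Lemma \ref{l:main}: the only place the buffer geometry enters is the single inequality $\langle q,x\rangle\le-2\gamma_2$, with the hyperplane constraint $\sum_i x(i)=1$ defining ${\mathcal S}_1$ now replaced by the box-face description of ${\mathcal S}_2$ (one coordinate saturated at $\beta(i_0)$, the rest bounded by $\beta$). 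I do not expect a genuine obstacle here; the only points needing a little care are the sign bookkeeping — that $\gamma_2\ge 0$ and $\log\rho_i\le 0$, so discarding the off-$i_0$ terms is legitimate and the factor $2$ in \eqref{e:qi} is exactly what makes $2\gamma_2+\langle q,x\rangle$ land below zero — and the verification that the all-nonempty gradient is indeed one of the $q_l$.
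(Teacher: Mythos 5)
Your proof is correct and takes essentially the same route as the paper's: bound the smoothed minimum $W^{\epsilon,\delta}(x)$ by the single affine piece whose gradient is the effective gradient of the all-nonempty boundary $(1,\dots,1)$, then use that its components $2\log\rho_i$ are nonpositive together with the saturated coordinate $x(i_0)=\beta(i_0)$ to get $\langle q,x\rangle\le 2\beta(i_0)\log\rho_{i_0}\le -2\gamma_2$, hence $W^{\epsilon,\delta}(x)\le-\alpha\epsilon\le 0$. Your sign convention $q(i)=2\log(\Lambda_i/\mu_i)$ is the consistent one (the paper's proof writes $2\log(\mu_i/\Lambda_i)$, an apparent typo), so no changes are needed.
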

\begin{proof}
Take any $x\in{\mathcal S}_2$. Then, there is an $i \le d$ such that $x(i)=\beta(i)$. 
Let $q_L$ be the effective gradient of the boundary $1=(1,1,1,\dots,1,1)$.
$$
W(x) = -\delta 
\log\sum_{l=1}^L \exp\left\{-\frac{1}{\delta}(2\gamma_2  -\alpha_l\epsilon + \langle q_l, x \rangle)\right\}
\le   2\gamma_2 + \langle q_L, x \rangle - \alpha_L\epsilon.
$$
By definition, $q_L(i) = 2\log \frac{\mu_i}{\Lambda_i}$ and the rest of the components of $q_L$ are negative.  These facts, 
\eqref{e:rectrate}, $x \in {\mathbb R}^d_+$, and $x(i)=\beta(i)$ imply that the last display
is less than
$ -\alpha_L \epsilon.$
This finishes the proof of this lemma.
\end{proof}

\paragraph{Numerical example}
Consider a network with five nodes with the following service rates:
$\mu_{1,2} = 0.038$, $\mu_{1,3} = 0.057$, $\mu_{1,0} = 0.095$,
$\mu_{2,4} = 0.076$, $\mu_{2,0} = 0.114$, $\mu_{3,5} = 0.095$, 
$\mu_{3,0}= 0.095$, $\mu_{4,0} = 0.19$, $\mu_{5,0} = 0.19$
and $\lambda = 0.1$.
We will suppose that the buffer sizes for the nodes are respectively:
$15$, $15$, $17$, $18$, $19$
Then $n=19$ and
$\beta(1) =\beta(2) = 15/19$, $\beta(3) = 17/19$, $\beta(4) = 18/19$, $\beta(5) = 1.$
The choice of the buffer sizes are rather arbitrary. We chose them relatively
small so that it was possible to compute the buffer overflow probability $p_{19}$
using the Markov property and direct iteration. The exact value of $p_{19}$
turns out to be $p_{19} = 6.8601 \times 10^{-9}$.

The relative node utilities are:
$\beta(1)\rho_1 = 0.208$, $\beta(2)\rho_2 = 0.042$, $\beta(3)\rho_3 = 0.013$,
$\beta(4)\rho_4 = 0.0004$, $\beta(5)\rho_5 = 0.0008.$
Node $1$ is clearly the most strained node and the loads on the rest of the nodes are spread.
Following the same reasoning as in Section \ref{s:numerical} 
we take $\epsilon=0.3$ and $\delta=0.1$.
The IS simulation now proceeds as before. One uses
$\bar{p}(\cdot|x) = \bar{p}^*(\cdot|x)$ given in \eqref{e:directav} for
the IS change of measure.
\begin{table}
\begin{center}
{\tiny
{\small Exact probability $p_{19} = 6.8601 \times 10^{-9}$}

\begin{tabular}{|l|c|c|c|}
\hline & Estimate $\hat{p}_n$ & Standard Error & 95 \% CI \\
\hline
Est. 1 & $7.33 \times 10^{-9}$& $0.42 \times 10^{-9}$& $[ 6.50,8.17] \times 10^{-9}$\\
\hline
Est. 2 & $6.81 \times 10^{-9}$& $0.34 \times 10^{-9}$& $[ 6.12,7.50] \times 10^{-9}$\\
\hline
Est. 3 & $7.30 \times 10^{-9}$& $0.38 \times 10^{-9}$& $[ 6.53,8.06] \times 10^{-9}$\\
\hline
Est. 4 & $7.05 \times 10^{-9}$& $0.39 \times 10^{-9}$& $[ 6.28,7.83] \times 10^{-9}$\\
\hline
Est. 5 & $7.01 \times 10^{-9}$& $0.37 \times 10^{-9}$& $[ 6.26,7.76] \times 10^{-9}$\\
\hline
\end{tabular}
}

\vspace{-0.3cm}
\caption{Simulation results for the case when each node has a separate buffer}\label{t:sim5}
\end{center}
\vspace{-0.7cm}
\end{table}
There turns out to be only eight effective gradients (out of a maximum of 32).
The results of five consecutive estimations
using the subsolution based IS algorithm are displayed in Table \ref{t:sim5}.
Once again, the estimation results are close to the exact value
$p_{19} = 6.8601 \times 10^{-9}$ and the formal $95\%$ confidence intervals are tight
and happen to contain the exact value.

\section{Discussion}\label{s:discussion}
The goal of the present paper was to extend the IS algorithms in \cite{DSW},
which looked at tandem Jackson networks, to more general networks. We thought tree networks were
an interesting generalization and a comparison with the algorithms
in \cite{DSW}
will reveal that the tree networks require much more sophisticated subsolutions
and IS algorithms for asymptotic optimality.
\cite{yeniDW}  proves a further
generalization 
to arbitrary
stable Jackson networks. In this section we would like to discuss how the
results in \cite{yeniDW} relate to our results.

Let $p_{i,j} = \mu_{i,j}/\mu_i$ denote the routing probability from
node $i$ to $j$, where $j$ is allowed to take the value $0$.
In the notation of the present paper, the IS algorithm in 
\cite{yeniDW} can be described as 
follows. Define the effective rate for the boundary $b$ as:
\begin{equation}\label{e:Mib2}
M_i(b) \doteq \begin{cases}
\mu_i, & \text{ if $b(i)=1$,}\\
\min\left(\mu_i, 
\sum_{k:i\rightarrow k} 
\frac{p_{i,k} \Lambda_i }{\Lambda_k } M_k(b) + \mu'_{i,0}\right), & 
\text{ if $b(i)=0$}.
\end{cases}
\end{equation}
As before if a node is nonempty under $b$, i.e., $b(i)=0$,
then its effective rate is just the service rate $\mu_i$.
If it is empty, one now takes a {\em weighted} sum of the effective rates of its neighbors, as before this sum is min'ed with $\mu_i$.
The weight of $M_k(b)$ is the fraction of the $k^{th}$ node's traffic in the fluid model
that is coming from node $i$. 
This fraction is always $1$ for a tree network and thus for such
networks \eqref{e:Mib2} reduces to \eqref{e:Mib}.
Once the effective rates are defined as above
one proceeds as in subsection \ref{ss:ssubsol}.

We note that \eqref{e:Mib} is a recursive formula: one can start from
the leaves of the network and go up and compute all effective gradients using \eqref{e:Mib}.
In the case of general Jackson networks \eqref{e:Mib2} is an equation that needs to be
solved; as observed in \cite{yeniDW}, it
can be solved by reducing it to a linear equation, which is a generalization  of \eqref{e:defmib}.
It can also be directly solved using \eqref{e:Mib2} itself
and an iterative method.

Another contribution of \cite{yeniDW} is the identification of the
large deviation decay rate $\gamma$ of $p_n$ for any exit 
boundary ${\mathcal S}$
for which such a rate exists. 
In the notation of the present paper,
\cite[Proposition 3.1]{yeniDW} asserts that
$$ \gamma = \inf_{x \in {\mathcal S} } - \langle q, x \rangle$$
where $q$ is the effective or simple gradient of $b=(1,1,1,\dots,1).$
As noted in \cite{yeniDW} this implies that the
IS change of measure given by \eqref{e:Mib2}, or \eqref{e:Mib} for the
case of tree networks, is asymptotically optimal for 
any buffer structure 
${\mathcal S}$ for  which there is a large deviation decay rate.

Finally, we would like to point out a parametrization that seems most natural for \eqref{e:Mib2}.
Define ${\bf M}_i\doteq 1/\rho_i$ and
 ${\bf M}_i(b)\doteq M_i(b)/\Lambda_i$. The first is the ordinary service to arrival ratio of node $i$. 
 The second can be thought of as the effective service to arrival ratio of the same node when the
 system is on boundary $b$.
By convention
let ${\bf M}_0(b) = 1$, i.e., the service to arrival ratio of the outside 
of the system is 
$1$. 
In terms of these new variables \eqref{e:Mib2} is
simply:
\begin{equation}\label{e:Mib3}
	{\bf M}_i(b) \doteq \begin{cases}
		{\bf M}_i, & \text{ if $b(i)=1$}\\
\min\left({\bf M}_i, \sum_{k:i\rightarrow k} p_{i,k} {\bf M}_k(b)\right) , & \text{ if $b(i)=0$},
\end{cases}
\end{equation}
where $k=0$ value is allowed in the summation to denote the outside of the
system.
If  node $i$ is empty, its effective service to arrival ratio is taken to be the average of
the effective ratios of the nodes that are directly connected to $i$. The average is taken
with respect to the routing probabilities. As before the ordinary service to arrival ratio
is an upperbound on the effective one. So if the average exceeds the ordinary, the effective
ratio is set to the ordinary ratio.

The effective gradient $q$ for $b$ will have components $-2\log {\bf M}_i(b)$. And the
change of measure $\bar{p}_b(q)$ is:
\begin{equation*}
\bar{\mu}_{i,j} =\begin{cases} \mu_{i,j}, ~~&\text{if node $i$ is empty}\\
	\mu_{i,j}
	\frac{  {\bf M}_j(b)}{ {\bf M}_i(b) }
	, ~~&\text{if node $i$ is nonempty},
\end{cases}
\end{equation*}
and this is renormalized so that $\bar{\mu}_{i,j}$ sum to $1$. One can 
use \eqref{e:Mib3}
directly to compute the IS algorithm.

\appendix

\section{Proof of Lemma \ref{l:main}}\label{a:proof}
Before we begin, a convention: the decay rate $\gamma$ depends on the 
buffer structure. We used
$\gamma_1$ for the shared buffer (${\mathcal S}_1$) and $\gamma_2$ for the individual buffers
for each node (${\mathcal S}_2$). In the proofs we will simply write $\gamma$.
\begin{lemma}\label{l:simpprop}
Let $q$ be the simple gradient associated with boundary $b$. Then
$H_{\bar{b}}(q) = 0$ for any $\bar{b} \ge b$.
\end{lemma}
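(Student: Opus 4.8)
The statement is that if $q$ is the simple gradient of a boundary $b$, then $H_{\bar b}(q)=0$ for every $\bar b \ge b$. Since $H_{\bar b}(q) = -2\log N_{\bar b}(q)$, this is equivalent to showing $N_{\bar b}(q) = 1$ for all $\bar b \ge b$. The plan is to compute $N_{\bar b}(q)$ directly by substituting the simple gradient $q(i) = 2\log(\Lambda_i/m_i(b))$ into the definition \eqref{e:Hams} of $N_b$, so that $e^{-q(1)/2} = m_1(b)/\Lambda_1$, $e^{(q(i)-q(j))/2} = (m_i(b)/m_j(b))\cdot(\Lambda_j/\Lambda_i)$, and $e^{q(i)/2} = \Lambda_i/m_i(b)$. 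Using $\Lambda_j = \Lambda_i \mu_{i,j}/\mu_i$ for $i\rightarrow j$ (so $\Lambda_j/\Lambda_i = \mu_{i,j}/\mu_i$) and $\Lambda$ at the root equal to $\lambda$, the terms should telescope.

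The key algebraic step is the following accounting identity. For a node $i$ that is nonempty under $\bar b$ (i.e. $\bar b(i)=1$), its contributions to $N_{\bar b}(q)$ are $\sum_{i\rightarrow j}\mu_{i,j}e^{(q(i)-q(j))/2} + \mu_{i,0}e^{q(i)/2}$. I would show that this equals $\Lambda_i$ (the arrival rate to node $i$) exactly when $q(j)$ is chosen as the simple gradient value for every child $j$, using the defining recursion \eqref{e:defmib} for $m$: namely $\sum_{i\rightarrow j: \bar b(j) = 1}\mu_{i,j}(\Lambda_j/\Lambda_i)\cdot(m_i(b)/\mu_j) + \cdots$, and when a child $j$ is empty under $\bar b$ we have $m_j(b) = \sum_{j\rightarrow k} m_k(b) + \mu'_{j,0}$, letting the contribution of $j$'s own outgoing edges be re-expressed in terms of $m_j(b)$. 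The upshot is a conservation law: the amount that "enters" node $i$ (namely $\Lambda_i$ if $\bar b(i) = 1$, contributed as part of its parent's sum, and $\lambda$ at the root contributed by the $\lambda e^{-q(1)/2}$ term) equals the amount that "leaves" $i$ toward its children and the outside. Since every node $i$ with $\bar b(i) = 0$ contributes the flat term $\mu_i$ in \eqref{e:Hams}, and I claim these exactly cancel against the $m_i(b) = \mu_i$ bookkeeping that would otherwise appear — wait, more precisely: for $\bar b(i) = 0$, node $i$ contributes $\mu_i$ to $N_{\bar b}(q)$, and I would check that $\mu_i$ equals precisely the flow that would have been attributed to $i$ and its subtree had $i$ been nonempty, because $q$ is the simple gradient of $b \le \bar b$ and $m_i(b)$ aggregates the whole empty subtree below $i$. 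Summing all contributions, everything collapses to $1$ because $\lambda + \sum_i \mu_i = 1$ by the normalization assumption.

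I would organize this cleanly by induction on the tree structure: define, for each node $i$, the quantity $n_i$ = (the sum of all terms in $N_{\bar b}(q)$ associated with the subtree rooted at $i$), plus the incoming term $\Lambda_i$ contributed by $i$'s parent (or $\lambda e^{-q(1)/2}$ at the root), and show by bottom-up induction using \eqref{e:defmib} and $\Lambda_j = \Lambda_i\mu_{i,j}/\mu_i$ that the net contribution telescopes so that the grand total $N_{\bar b}(q)$ equals $\lambda + \sum_{i=1}^d \mu_i = 1$. The base case is a leaf $i$: if $\bar b(i) = 1$ its only term is $\mu_{i,0}e^{q(i)/2} = \mu_{i,0}\Lambda_i/m_i(b)$, and since a leaf with $b(i) = 1$ forces the simple-gradient value to use $m_i(b) = \mu_i = \mu_{i,0}$, this is $\Lambda_i$; if $\bar b(i) = 0$ it contributes the flat $\mu_i$.

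The main obstacle I anticipate is handling the interplay between the two bitmaps $b$ and $\bar b$ correctly: the gradient $q$ is frozen as the simple gradient of $b$, but the Hamiltonian being evaluated is $H_{\bar b}$ for a possibly larger $\bar b$, so a node may be "empty" in $b$ (and hence its $q$-value reflects the aggregated rate $m_i(b)$ of its whole empty subtree) yet "nonempty" in $\bar b$ (and hence it contributes the exponential service terms rather than the flat $\mu_i$). I need to verify that the telescoping still works in this mixed case — the cleanest way is to note that $H_b(q) = 0$ for the original $b$ is the easy case (standard for these subsolutions), and then show that "promoting" an empty node of $b$ to be nonempty in $\bar b$ does not change the value of $N$, because replacing the flat term $\mu_i$ by $\sum_{i\rightarrow j}\mu_{i,j}e^{(q(i)-q(j))/2} + \mu_{i,0}e^{q(i)/2}$ (with the frozen $q$) yields exactly $\mu_i$ again — this is precisely the content of the recursion \eqref{e:defmib} that $m_i(b) = \sum_{i\rightarrow k} m_k(b) + \mu'_{i,0}$, rewritten multiplicatively. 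Establishing this single "promotion is value-neutral" step, then iterating it from $b$ up to $\bar b$, gives the result.
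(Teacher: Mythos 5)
Your overall route is the same as the paper's: first establish $N_b(q)=1$ for the boundary $b$ itself, then show that promoting any node that is empty in $b$ to nonempty in $\bar b$ leaves $N$ unchanged, because with the frozen simple gradient $q$ the replacement terms $\sum_{j:i\rightarrow j}\mu_{i,j}e^{(q(i)-q(j))/2}+\mu_{i,0}e^{q(i)/2}$ collapse, via \eqref{e:defmib} together with $\Lambda_j=\Lambda_i\mu_{i,j}/\mu_i$ and $\mu'_{i,0}=\Lambda_i\mu_{i,0}/\mu_i$, to exactly $\mu_i$. That ``promotion is value-neutral'' step is precisely the computation the paper performs (it shows the difference $N_{\bar b}(q)-N_b(q)$ vanishes term by term), and your description of it is correct.

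However, the bookkeeping you sketch for the base case contains two concrete errors that would derail the computation if carried out as written. First, your exponential formula is inverted: since $e^{q(i)/2}=\Lambda_i/m_i(b)$, one has $e^{(q(i)-q(j))/2}=\bigl(\Lambda_i/m_i(b)\bigr)\bigl(m_j(b)/\Lambda_j\bigr)$, not $\bigl(m_i(b)/m_j(b)\bigr)\bigl(\Lambda_j/\Lambda_i\bigr)$. Second, and more substantively, the exponential terms of a node $i$ with $b(i)=1$ do not sum to $\Lambda_i$; using $\Lambda_j=\Lambda_i\mu_{i,j}/\mu_i$ they sum to $\sum_{j:i\rightarrow j} m_j(b)+\mu'_{i,0}$, and your claim holds only at leaves (where $\mu_{i,0}=\mu_i$). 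The conserved quantity in the telescoping is therefore the simple rate $m$, not the arrival rate $\Lambda$: the arrival term contributes $\lambda e^{-q(1)/2}=m_1(b)$, each edge out of a node that is nonempty in $b$ contributes $m_j(b)$, and the identity to prove is $\sum_{i:b(i)=1}\bigl(\sum_{j:i\rightarrow j} m_j(b)+\mu'_{i,0}\bigr)+m_1(b)=\lambda+\sum_{i:b(i)=1}\mu_i$, which follows by unravelling \eqref{e:defmib} down the tree (the $m_j(b)$ of an empty node aggregates the $\mu_k$ of nonempty nodes and the $\mu'_{k,0}$ of empty nodes in its subtree), together with $\sum_k\mu'_{k,0}=\lambda$ and the normalization $\lambda+\sum_i\mu_i=1$. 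With the flow variable corrected in this way, your bottom-up telescoping and the promotion step give the lemma exactly as in the paper.
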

\begin{proof}
We first prove that $H_b(q)=0$, or equivalently $N_b(q) = 1$.
Directly from the definitions \eqref{e:Hams}, \eqref{e:defmib} one
sees that $N_b(q)=1$ if and only if
{\small
\begin{equation*}
\sum_{i: b(i) = 1}\left(\sum_{j:i \rightarrow j} m_j(b) + \mu'_{i,0}\right)
 + m_1(b)
 = 
\lambda + \sum_{i: b(i) = 1} \mu_i.
\end{equation*}
}
The definition of $\mu'_{i,0}$ directly imply that 
$\sum_{i=1}^d \mu'_{i,0} = \lambda.$ The above display
follows from this fact and \eqref{e:defmib}.

Next fix a $\bar{b} > b$. 
We will show that $N_{\bar{b}}(q) =1$. 
\begin{equation}\label{e:difNJNI}
N_{\bar{b}}(q) - N_b(q) = 
\sum_{ i: \bar{b}(i)-b(i)=1, i\rightarrow j } 
\hspace{-0.3cm}
\mu_{i,j} e^{\frac{q(i)-q(j)}{2}}
+\sum_{ i:\bar{b}(i)-b(i)=1} \mu_{i,0} e^{q(i)/2} 
- 
\hspace{-0.3cm}
\sum_{ i :\bar{b}(i)-b(i)=1}
\mu_i 
\end{equation}
Fix $i$ such that $\bar{b}(i)-b(i)=1$
and let $C$ denote the terms 
contributed by
the index $i$
in the first two sums. Our goal is now to show that $C=\mu_i$.
This will imply that first two sums and the last sum in \eqref{e:difNJNI}
cancel each other and that $N_{\bar{b}}(q) = N_{b}(q)$.
Because $b(i) = 0$ we have that
\begin{equation}\label{e:mirepp1}
m_i(b) = \sum_{j: i\rightarrow j} m_j(b) + \mu'_{i,0}.
\end{equation}
Then
$$
C=
\mu_{i,0} e^{q(i)/2} + 
\sum_{j: i \rightarrow j} \mu_{i,j} e^{\frac{q(i)-q(j)}{2}}
= \mu_{i,0} \frac{\Lambda_i}{m_i(b)} +  \sum_{j: i \rightarrow j} \mu_{i,j} 
\frac{\Lambda_i}{m_i(b)}\frac{m_j(b)}{\Lambda_j}
$$
At this point the facts 
$\Lambda_j = \Lambda_i \frac{\mu_{i,j}}{\mu_i}$ 
and
$\frac{\mu_{i,0}\Lambda_i}{\mu_i}=\mu'_{i,0}$
and
 \eqref{e:mirepp1}
 and simple arithmetic yield $C=\mu_i$.
Thus the difference in \eqref{e:difNJNI} is zero, i.e.,
$N_{\bar{b}}(q) = N_b(q) = 1$. This finishes the proof of this lemma.
\end{proof}

\begin{lemma}\label{l:egcomp}
Let $q$ be the effective gradient associated with boundary $b$. Then
$ H_{b'}(q) \ge 0$
for all $b' \ge b$.
\end{lemma}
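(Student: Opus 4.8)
The plan is to reduce the claim about the \emph{effective} gradient to the already-established Lemma \ref{l:simpprop} about \emph{simple} gradients, using the bridging Lemma \ref{l:effsimp}. Recall that Lemma \ref{l:effsimp} produces, for the boundary $b$ with effective gradient $q$, a boundary $\bar{b} \ge b$ such that $q$ is the \emph{simple} gradient associated with $\bar{b}$: concretely, $\bar{b}$ is obtained from $b$ by flipping to $1$ exactly those empty coordinates $i$ for which $M_i(b) = \mu_i$, and on the remaining (still-empty) coordinates the recursions \eqref{e:Mib} and \eqref{e:defmib} agree, so $M_i(b) = m_i(\bar{b})$ for every $i$, whence $q$ is both the effective gradient of $b$ and the simple gradient of $\bar{b}$.

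The key observation is then that I do not actually need the exact value of $H_{b'}(q)$ for all $b' \ge b$; I only need the inequality $H_{b'}(q) \ge 0$, i.e. $N_{b'}(q) \le 1$, and I can split the coordinates of $b'$ relative to $\bar{b}$ into two groups. First, for any $b'$ with $b' \ge \bar{b}$, Lemma \ref{l:simpprop} applies directly (with $\bar{b}$ in the role of its ``$b$'' and $b'$ in the role of its ``$\bar{b}$'') and gives $H_{b'}(q) = 0$, hence $\ge 0$. So the only genuinely new case is a boundary $b'$ with $b \le b' $ but $b' \not\ge \bar{b}$ — that is, $b'$ leaves empty some coordinate $i$ that $\bar{b}$ declared full because $M_i(b) = \mu_i$. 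The plan is to start from the equality $N_{\bar{b}}(q) = 1$ (Lemma \ref{l:simpprop} with $b' = \bar{b}$) and compare $N_{b'}(q)$ to $N_{\bar{b}}(q)$ term by term over exactly those coordinates $i$ where $\bar{b}(i) = 1$ but $b'(i) = 0$.

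For such a coordinate $i$, passing from $\bar{b}$ to $b'$ replaces, in the Hamiltonian $N$, the contribution $\mu_i$ (the ``empty-to-full'' term $\sum_{i:b(i)=0}\mu_i$ loses $i$, equivalently the full-node terms gain $\mu_i$) by the terms $\mu_{i,0} e^{q(i)/2} + \sum_{j:i\rightarrow j}\mu_{i,j} e^{(q(i)-q(j))/2}$. Exactly as in the computation of $C$ inside the proof of Lemma \ref{l:simpprop}, using $\Lambda_j = \Lambda_i \mu_{i,j}/\mu_i$, $\mu'_{i,0} = \mu_{i,0}\Lambda_i/\mu_i$, and $q(k) = 2\log(\Lambda_k/M_k(b))$, these terms evaluate to
$$
\frac{\Lambda_i}{M_i(b)}\Bigl(\mu'_{i,0} + \sum_{j:i\rightarrow j} M_j(b)\Bigr).
$$
Since $M_i(b) = \mu_i$ for this $i$, the defining recursion \eqref{e:Mib} gives $\mu_i \le \mu'_{i,0} + \sum_{j:i\rightarrow j} M_j(b)$, so the displayed quantity is $\ge \Lambda_i = \mu_i \rho_i \le \mu_i$ — wait, I want the opposite direction; let me re-read: $M_i(b)=\mu_i=\min(\mu_i, \mu'_{i,0}+\sum M_j(b))$ forces $\mu'_{i,0}+\sum M_j(b) \ge \mu_i$, hence the displayed term is $\ge \Lambda_i$, which is \emph{not} directly what I need. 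The point I actually want is that this term is $\ge \mu_i$: indeed $\frac{\Lambda_i}{\mu_i}\bigl(\mu'_{i,0}+\sum_j M_j(b)\bigr) \ge \frac{\Lambda_i}{\mu_i}\cdot \mu_i = \Lambda_i$, and that bound is too weak. So the correct comparison runs the other way — the replacement terms can only be \emph{larger} than $\mu_i$ would suggest, hence $N_{b'}(q) \ge N_{\bar b}(q) = 1$?? That would give $H_{b'}(q) \le 0$, the wrong sign. Hence I expect the main obstacle to be getting the direction of this inequality right: I will need to track carefully whether emptying a coordinate in $N_b$ replaces a term by a larger or smaller one, and use the $\min$ in \eqref{e:Mib} in the correct direction (it should be that the ``empty'' version of node $i$'s contribution is bounded \emph{above} by $\mu_i$, via $M_i(b)\le \mu'_{i,0}+\sum_j M_j(b)$ handled recursively down the subtree, so that each such replacement does not increase $N$, giving $N_{b'}(q)\le N_{\bar b}(q)=1$). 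Making this monotonicity precise — most cleanly by inducting on the subtree rooted at $i$ and showing $N_{b'}(q)-N_{\bar b}(q)\le 0$ term by term — is the heart of the argument, and everything else is the bookkeeping already rehearsed in Lemma \ref{l:simpprop}.
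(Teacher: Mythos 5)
Your overall strategy is the paper's own: pass to $\bar b\ge b$ via Lemma \ref{l:effsimp}, invoke Lemma \ref{l:simpprop} to get $N_{\bar b}(q)=1$, and then compare $N_{b'}(q)$ with $N_{\bar b}(q)$ coordinate by coordinate. But the decisive step --- the sign of that comparison --- is exactly where the proposal breaks down, and you leave it unresolved. Two concrete slips create the confusion. First, the algebra: for a node $i$ with $b(i)=0$, using $q(k)=2\log\bigl(\Lambda_k/M_k(b)\bigr)$, $\Lambda_j=\Lambda_i\mu_{i,j}/\mu_i$ and $\mu'_{i,0}=\Lambda_i\mu_{i,0}/\mu_i$, the full-node contribution is
$\mu_{i,0}e^{q(i)/2}+\sum_{j:i\rightarrow j}\mu_{i,j}e^{(q(i)-q(j))/2}
=\frac{\mu_i}{M_i(b)}\bigl(\sum_{j:i\rightarrow j}M_j(b)+\mu'_{i,0}\bigr)$,
with prefactor $\mu_i/M_i(b)$, not $\Lambda_i/M_i(b)$ as you wrote. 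For the coordinates in question ($\bar b(i)=1$, $b(i)=0$) one has $M_i(b)=\mu_i$, so this contribution equals $\sum_{j}M_j(b)+\mu'_{i,0}\ge\mu_i$ by the $\min$ in \eqref{e:Mib}; the relevant comparison is with $\mu_i$, not with $\Lambda_i$, which is why your bound ``$\ge\Lambda_i$'' looked too weak. Second, the replacement direction: at a coordinate with $\bar b(i)=1$ and $b'(i)=0$, passing from $N_{\bar b}(q)$ to $N_{b'}(q)$ \emph{removes} the full-node terms and \emph{inserts} $\mu_i$ (node $i$ is empty under $b'$), i.e.\ it replaces something $\ge\mu_i$ by $\mu_i$; you describe the replacement the other way around, which is what drives you to the spurious conclusion $N_{b'}(q)\ge N_{\bar b}(q)$ and the ``wrong sign'' impasse.

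With these two corrections the lemma follows at once, and no induction on subtrees is needed: coordinates with $b(i)=1$ (hence $\bar b(i)=b'(i)=1$) cancel in the difference; each coordinate with $\bar b(i)=1>b'(i)$ can only decrease $N$ by the inequality above; and coordinates with $\bar b(i)=0<b'(i)$ --- which your case split ignores, although they occur whenever $b'\ge b$ is not comparable to $\bar b$ --- are harmless because there the $\min$ is attained by the sum, $M_i(b)=\sum_j M_j(b)+\mu'_{i,0}$, so the full contribution equals $\mu_i$ exactly. Hence $N_{b'}(q)\le N_{\bar b}(q)=1$ for every $b'\ge b$, which is the claim. This one-step term-by-term comparison is precisely the paper's proof; your closing guess (``each such replacement does not increase $N$, giving $N_{b'}(q)\le N_{\bar b}(q)=1$'') is the correct resolution, but as written the proposal neither derives it nor justifies it correctly: the statement that the ``empty'' contribution is bounded above by $\mu_i$ is vacuous, since the empty contribution \emph{is} $\mu_i$; what must be shown, and what closes the gap, is that the \emph{full} contribution at the effective gradient is at least $\mu_i$ on the flipped coordinates.
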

\begin{proof}
$ H_{b'}(q) \ge 0$ if and only if $
N_{b'}(q) \le 1$.
By Lemma \ref{l:effsimp} there exists $\bar{b} \ge b$ such that
$q$ is the simple gradient associated with $\bar{b}$. Then by Lemma
\ref{l:simpprop} 
$
N_{b'}(q) =1
$
for all $b' \ge \bar{b}$. Now take any $b'$ such that $b' < \bar{b}$ and
$b' \ge b$. 
 Because $\bar{b} > b' \ge b$ we have
 {\small
\begin{align}\label{e:tmp0l5}\notag
&N_{\bar{b}}(q) - N_b(q)  \\
&~~~=\sum_{ i: \bar{b}(i)-b(i) = 1}\left( \sum_{j:i\rightarrow j } 
\mu_{i,j} e^{\frac{q(i)-q(j)}{2}}
+ \mu_{i,0} e^{q(i)/2} \right)
- 
\hspace{-0.3cm}
\sum_{ i: \bar{b}(i)-b(i) = 1}
\mu_i \notag
\\
&~~~=
\sum_{ i: \bar{b}(i)-b(i) = 1}
\left( \sum_{j:i\rightarrow j } 
\mu_{i,j}\frac{\Lambda_i}{M_i(b)}\frac{M_j(b)}{\Lambda_j}
+\mu_{i,0}\frac{\Lambda_i}{M_i(b)} \right)
- 
\hspace{-0.3cm}
\sum_{ i: \bar{b}(i)-b(i) = 1}
\mu_i\notag\\
&~~~=
\sum_{ i: \bar{b}(i)-b(i) = 1}
\left( 
\mu_i \frac{\sum_{j:i\rightarrow j } M_j(b) + \mu'_{i,0}}{M_i(b)}\right)
- 
\hspace{-0.3cm}
\sum_{ i: \bar{b}(i)-b(i) = 1}
\mu_i
\end{align}
}
Now by the construction of $\bar{b}$, $\bar{b}(i)-b(i)=0$ if and only
if
$
M_i(b) = \mu_i \le \sum_{i\rightarrow j} M_j(b) + \mu'_{i,0},
$
The last display and \eqref{e:tmp0l5} imply
$N_{\bar{b}}(q)  \ge N_b(q).$
Because $N_b(q) =1$ (because $q$ is the simple gradient associated with
boundary $b$) this finishes the proof of this lemma.
\end{proof}


\begin{proof}[Proof of Lemma \ref{l:main}]
The proof is this lemma is similar to the proof of Theorem 4.31 in \cite{thesis}.
For small positive real numbers $\delta,\epsilon$ let $W^{\epsilon,\delta}$
be defined as in \eqref{e:Wep}. 
For ease of notation we will drop the superscript $(\epsilon,\delta)$ and
write $W$.
We would like to prove the following:
there is a constant $C_1$ that only depends on the parameter system
such that
for all $x \in {\mathbb R}_+^d$ 
$
H_{b}(DW(x)) \ge -C_1 \exp(-\epsilon/\delta),
$
where $b$ defined in \eqref{e:bx} is the boundary corresponding to $x$.
Let $E$ be the
set of effective gradients $q$
such that there is a boundary $b' \le b$ with effective gradient $q$. 
Define
$
q' = \sum_{q_l \in E} w_l^{\epsilon,\delta}(x) q_l,
$
where $w_l^{\epsilon,\delta}$ are the weights defined in \eqref{e:wi}.
Once again to ease notation, we drop the superscript $(\epsilon,\delta)$.
Its definition directly implies that $H_b$ is concave and Lipschitz continuous.
By Lemma \ref{l:egcomp} we have that
$H_{b}(q) \ge 0$ for $q \in E$. 
This fact and the concavity of $H_{b}$ and
$H_{b}(0)=0$
imply that
$H_{b}(q') \ge 0.$
This, \eqref{e:wi} and the Lipschitz continuity
of $H_b$ give
\begin{align*}
H_b(DW(x)) &= H_b(q') + H_b(DW(x)) - H_b(q') \ge |H_b(DW(x)) - H_b(q')|\\
           &\ge K | q' - DW(x)|   =  -K\sum_{q^l \in E^c } w_l(x) | q^l|.
\end{align*}
The last inequality follows from \eqref{e:wi} and the triangle inequality.
Therefore to prove the first part of Lemma \ref{l:main} it is enough
to prove
$
w_l(x)\le \exp(-\epsilon/\delta),
$
for $l$ such that $q_l \in E^c$. 

By its definition \eqref{e:wi} $w_l$ equals
{\small
\begin{align}
\label{e:boundonw}
w_l(x) = \frac{\exp\left\{-{W}_l^\epsilon(x)/\delta\right\}}
{\sum_{j=1}^L \exp\left\{- {W}_j^\epsilon(x)/\delta \right\}}
&= 
\frac{ \exp\left\{(\alpha_l\epsilon -  \langle q_l, x \rangle)/\delta\right\}}
{\sum_{j=1}^L \exp\left\{(\alpha_j\epsilon -  \langle q_j, x \rangle)/\delta \right\}}\notag\\
&\le
\frac{\exp\left\{(\alpha_l\epsilon -  \langle q_l, x \rangle)/\delta\right\}}
{\exp\left\{(\alpha_{j_0}\epsilon -  \langle q_{j_0}, x \rangle)/\delta 
\right\}},
\end{align}
}
where $q_{j_0}$ is an effective gradient to be selected.
By Definition \ref{d:defa}, $\alpha_l$ is one plus the number of $0$'s in the
the boundary (bitmap) $r$ whose simple gradient equals
$q_l$.
Form the bitmap $\tilde{r}$
from $r$ as follows: if
$r(i) = 1$ but $b_x(i) = 0$ then set $\tilde{r}(i) = 0$ 
otherwise set $\tilde{r}(i) = r(i)$.
By this construction
$\tilde{r} \le b_x$ and $\tilde{r} < r $. The last inequality is strict,
because otherwise we would have $b_x= r$ which would imply, 
by Lemma \ref{l:egcomp}, $H_{b_x}(q_l) \ge 0$ which in turn contradicts
$q_l \notin E$.  Let $q_{j_0}$ 
be the effective gradient associated
with the bitmap $\tilde{r}$. $\tilde{r} \le b_x$ and Lemma 
\ref{l:egcomp} imply 
that $H_{b_x}(q_{j_0}) \ge 0$. This implies that $q_{j_0} \in E$ and
consequently $q_{j_0} \neq q_l \in E^c$. 
These facts and the strict inequality 
$\tilde{r} < r$ imply that $\alpha_{j_0} - \alpha_l \ge 1$.

Furthermore, remember $x$ is such that $x_i = 0$ if $b_x(i) = 0$. The
bitmaps $r$ and $\tilde{r}$ differ only at such $i$. Then the effective
gradients of these bitmaps, namely $q_l$ and $q_{j_0}$ will also differ
only at such $i$. This means $\langle q_l , x \rangle = \langle q_{j_0}, x\rangle.$
These considerations and \eqref{e:boundonw} imply
$
w_l(x) \le  \exp(-\epsilon/\delta)
$
and hence the first part of Lemma \ref{l:main}.

By its definition
$$
W(0) =
-\delta 
\log\sum_{l=1}^L \exp\left\{-\frac{2\gamma -\alpha_l \epsilon}{\delta} \right\}
= 2\gamma -\epsilon\left(\frac{\delta}{\epsilon}
\log\sum_{l=1}^L \exp\left\{
\frac{\alpha_l}{\delta/\epsilon} \right\}\right)
$$
This proves the second part of Lemma 
\ref{l:main}.

Now let us prove the third part. Let $q_L$ be the effective gradient
of the boundary $1=(1,1,1,\dots,1,1)$.
For $x \in {\mathbb R}^d_+$ with
$x_1+x_2+\cdots+x_d =1 $ we have the following estimate:
$$W(x) = -\delta 
\log\sum_{l=1}^L \exp\left\{-\frac{1}{\delta}(2\gamma  -\alpha_l\epsilon + \langle q_l, x \rangle)\right\}
\le  \langle q_L, x \rangle +2\gamma - \alpha_L\epsilon.
$$
By definition $q_L(i) = 2\log \frac{\mu_i}{\Lambda_i}$. This and \eqref{e:ldres} imply
that the last line is less than $ -\alpha_L \epsilon.$
This finishes the proof of the third part of Lemma \ref{l:main}.
It only remains to prove the last part. Differentiating the
first expression in \eqref{e:wi} gives:
$
\frac{\partial^2 W}{\partial x_j \partial x_i}(x) = \sum_{l=1}^L \frac{\partial w_l}{\partial x_j}(x) q_l(i).
$
Differentiating the second expression in \eqref{e:wi} gives:
$
\frac{\partial w_l}{\partial x_j}(x) = \frac{1}{\delta} w_l(x) \left(\sum_{k=1}^L w_k(x)( q_k(j) -q_l(j))\right).
$  
These imply the bound in part 4 of Lemma \ref{l:main}, which is what
we wanted to prove.
\end{proof}
\bibliography{../../../bibliography/IS} 
\end{document}